\theoremstyle{plain}
\numberwithin{equation}{section} \numberwithin{figure}{section}
\newtheorem{theorem}{Theorem}[section]
\newtheorem{lemma}[theorem]{Lemma}
\newtheorem{proposition}[theorem]{Proposition}
\newtheorem{definition}[theorem]{Definition}
\theoremstyle{definition}
\newtheorem{remark}[theorem]{Remark}
\newcommand{\R}{{\mathbb R}}
\begin{document}

\title[Equivalence of solutions for the $p(x)$-Laplacian]{Equivalence of solutions for non-homogeneous $p(x)$-Laplace equations}
\author{Mar\'ia Medina and Pablo Ochoa}
\address[Mar\'ia Medina]{Departamento de Matem\'aticas,
Universidad Aut\'onoma de Madrid,
Ciudad Universitaria de Cantoblanco,
28049 Madrid, Spain}
\email{maria.medina@uam.es}
\address{Pablo Ochoa. Facultad de Ingenier\'ia, Universidad Nacional de Cuyo. CONICET\\Parque Gral. San Mart\'in 5500\\
Mendoza, Argentina.}
\email{pablo.ochoa@ingenieria.uncuyo.edu.ar}

%\date{\today}

\begin{abstract}We establish the equivalence between weak and viscosity solutions for non-homogeneous $p(x)$-Laplace equations with a right-hand side term depending on the spatial variable, the unknown, and its gradient. We employ inf- and sup-convolution techniques to state that viscosity solutions are also  weak solutions, and  comparison principles to prove the converse. The new aspects  of the $p(x)$-Laplacian compared to the constant case are the presence of $\log$-terms  and the lack of the invariance under translations. 

\end{abstract}

	\keywords{Nonlinear elliptic equations, p(x)-Laplacian, Viscosity solutions, Weak solutions}
	
	\subjclass[2020]{35J62, 35J60, 35D40, 35D30,}

\maketitle

\noindent {\it To the memory of Ireneo Peral, mentor and friend. Te echamos de menos.}

\section{Introduction}

Let $\Omega$ be a bounded domain of $\R^n$. We will consider non-homogeneous $p(x)$-Laplace equations of the form
\begin{equation}\label{Eq 1}
-\Delta_{p(x)}u=f(x, u, D u) \quad \text{in }\Omega,
\end{equation}where, given a function $p: \Omega \to (1, \infty)$,  $-\Delta_{p(x)}$ is the  $p(x)$-Laplace operator defined as
\begin{equation}\label{defLap}
-\Delta_{p(x)} u := -\textnormal{div}\left( |D u|^{p(x)-2}D u\right).
\end{equation}
For a smooth function $\varphi$ with $D\varphi \neq 0$, we can expand the expression above and write
$$-\Delta_{p(x)} \varphi(x)= -|D\varphi|^{p(x)-2}\left(-\Delta \varphi + \frac{(p(x)-2)}{|D\varphi|^2} \Delta_\infty \varphi \right) -|D\varphi|^{p(x)-2}Dp(x)\cdot D\varphi \log|D\varphi|,$$where
$$\Delta_\infty \varphi:= D^{2}\varphi D\varphi\cdot D\varphi $$is the $\infty$-Laplacian. Consequently,  two fundamental differences exist between $-\Delta_{p(x)}$ and the $p$-Laplacian, with $p$ constant: the fact that this operator is not invariant under translations in $x$ and the presence of $\log$-terms.

Recently, the study of partial differential equations with variable exponents has been motivated by the description of models in electrorheological and thermorheological fluids, image processing, or robotics. As an illustrative example, we discuss the model \cite{CLR} for image restoration. Let us consider an input $I$ that corresponds to shades of gray in a domain  $\Omega \subset \mathbb{R}^{2}$, and  assume that I is made up of the true image $u$ corrupted by the noise, which is additive. Thus, the effect of the noise can be eliminated by smoothing the input,  minimizing
 $$E_1(u):=\int_\Omega |D u(x)|^{2}+ |u(x)-I(x)|^{2}dx.$$Unfortunately, smoothing destroys the small details of the image.  A better approach is the total variation smoothing. Since an edge in the image gives rise to a very large gradient, the level sets around the edge are very distinct, so this method does a good job of preserving edges. This method consists of  minimizing the energy
 $$E_2(u):=\int_\Omega |D u(x)|+ |u(x)-I(x)|^{2}dx.$$However, total variation smoothing preserves edges, but it also creates edges that did not exist in the original image. The suggestion of \cite{CLR} was to ensure total variation smoothing ($p=1$) along edges and Gaussian
smoothing  ($p=2$) in homogeneous regions. Furthermore, it employs anisotropic diffusion ($1 < p < 2$) in
regions which may be piecewise smooth or in which the difference between noise and edges is difficult to
distinguish. Specifically, they proposed to minimize

 $$E(u):=\int_\Omega \phi(x, D u) + (u-I)^{2}dx$$where
 $$\phi(x, \xi):= \left\lbrace
  \begin{array}{l}
       \frac{1}{p(x)}|\xi|^{p(x)}, \quad\textnormal{ if  }  |\xi| \leq \beta, \\|\xi|-C(\beta, p(x)), \quad\textnormal{ if }  |\xi| > \beta,\\
  \end{array}
  \right.$$with $\beta >0$ and $1 \leq p(x) \leq 2$. Observe that
where the gradient is sufficiently large (i.e. likely edges), only  total variation based diffusion will be used. Where
the gradient is close to zero (i.e. homogeneous regions), the model is isotropic. At all other locations,
the filtering is somewhere between Gaussian and total variation based. When  minimizing over $u$ of bounded variation, under Dirichlet conditions, the associated  flow  is
$$u_t - \text{div}\left( \phi_r(x, D u)\right) +2(u-I)=0, \text{ in }\Omega \times [0, T],$$with $u(x, 0)=I(x)$, $u$ satisfying the prescribed boundary conditions. Hence, the model above is directly related  to the study of PDE's with the $p(x)$-Laplacian operator
$$\Delta_{p(x)}u:= \text{div }\left(|D u|^{p(x)-2}D u \right).$$Classical references for existence and regularity of solution for $p(x)$-Laplacian Dirichlet problems are \cite{FZ1, FZ, FZ2}, among others.

In this work we are interested in analyzing the equivalence between weak and viscosity solutions (see Section \ref{defSols} for the precise definitions of these notions) of the problem \eqref{Eq 1} under certain conditions on $f$. The relation among different types of solutions for different operators has been studied by several authors in the last decades. For linear problems, the equivalence between distributional and viscosity solutions was obtained by Ishii in \cite{Is}. Later on, for the homogeneous $p$-Laplace operator (i.e., \eqref{Eq 1} with $f\equiv 0$), the equivalence between weak and viscosity solutions was first obtained in \cite{JLM}, and later on in \cite{JJ} with a different proof. For a source term like the one in \eqref{Eq 1}, depending on all the lower-order terms, this equivalence for the $p$-Laplace equation was given in \cite{MO}, following some ideas from \cite{JJ}. Similar studies have been recently made for non-local operators, see \cite{BM, KKL}.

In the case of the variable $p(x)$-Laplacian, the equivalence for the homogeneous equation  was proved in \cite{JLM}. Related results appear in \cite{Sil} between solutions of homogeneous equations involving the {\it strong} and the {\it normalized} $p(x)$-Laplacian. Up to our knowledge, no results are available in the case $f\not\equiv 0$. Indeed, combining techniques from \cite{JLM, Sil} to deal with the operator, and from \cite{MO} to deal with the function $f$, the goal of this work is to prove the equivalence of weak and viscosity solutions for the general problem \eqref{Eq 1}.

Indeed, let us assume from now on that the exponent $p$ satisfies  
\begin{equation}\label{hipP}
p \in \mathcal{C}^{1}(\overline{\Omega}),\quad 1 < p^- \leq p^+ < \infty,\quad \mbox{where}\quad p^-:=\min_{x \in \overline{\Omega}}p(x),\; p^+:=\max_{x \in \overline{\Omega}}p(x).
\end{equation}
Hence, our first result is the following:
\begin{theorem}\label{ViscToWeak}
Let $p$ satisfy \eqref{hipP}. Assume that $f=f(x,t,\eta)$ is uniformly continuous in $\Omega\times\R\times\R^n$, non increasing in $t$, Lipschitz continuous in $\eta$, and satisfies the growth condition 
\begin{equation}\label{growth}
|f(x,t,\eta)|\leq \gamma (|t|)|\eta|^{{p(x)-1}}+\phi(x),
\end{equation}
where $\gamma\geq 0$ is continuous and $\phi\in L^\infty_{loc}(\Omega)$. Thus, if  $u$  is a locally Lipschitz viscosity supersolution of \eqref{Eq 1} then it is a weak supersolution of the problem.
\end{theorem}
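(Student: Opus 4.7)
The plan is to follow the inf-convolution strategy used in \cite{JLM} and adapted in \cite{MO, Sil} for variable exponents with source terms. First, I would introduce the inf-convolution
$$u_\varepsilon(x) := \inf_{y \in \Omega}\left\{ u(y) + \frac{|x-y|^{q}}{q\varepsilon^{q-1}} \right\}$$
for an auxiliary exponent $q$ chosen large enough (at least $q \geq 2$ and $q > p^+/(p^- - 1)$) to dominate the operator. Since $u$ is locally Lipschitz, the infimum is attained at some point $y_\varepsilon(x)$ with $|x - y_\varepsilon(x)| \leq C\varepsilon$ on compact subdomains, so $u_\varepsilon \to u$ locally uniformly, and $u_\varepsilon$ inherits local semiconcavity from the quadratic-type penalization. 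In particular, $u_\varepsilon$ is locally Lipschitz and, by Alexandrov's theorem, twice differentiable almost everywhere.

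Next, I would verify that $u_\varepsilon$ is itself a viscosity supersolution of a perturbed equation
$$-\Delta_{p(x)} u_\varepsilon \geq f_\varepsilon(x, u_\varepsilon, D u_\varepsilon) \quad\text{in } \Omega^{\varepsilon},$$
where $\Omega^{\varepsilon} \nearrow \Omega$ and $f_\varepsilon \to f$ locally uniformly. The standard mechanism is: if $\psi \in C^2$ touches $u_\varepsilon$ from below at $x_0$, then $\tilde\psi(y) := \psi(y + x_0 - y_\varepsilon) - |x_0 - y_\varepsilon|^{q}/(q\varepsilon^{q-1})$ touches $u$ from below at $y_\varepsilon$; the viscosity supersolution property of $u$ at $y_\varepsilon$, together with the continuity of $p$ and $f$ in $x$ and $t$, then yields the claimed inequality at $x_0$, with $f_\varepsilon$ absorbing the translation discrepancy.

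Once $u_\varepsilon$ is a viscosity supersolution of the perturbed equation, semiconcavity allows the viscosity inequality to be upgraded to a pointwise a.e.\ inequality
$$-\Delta_{p(x)} u_\varepsilon(x) \geq f_\varepsilon(x, u_\varepsilon(x), D u_\varepsilon(x))$$
at every point of twice differentiability, and combining this with the structure of the Aleksandrov Hessian (whose singular part is negative semidefinite, hence goes in the favorable direction for a supersolution) one obtains a weak supersolution formulation of the perturbed equation. Finally, passing $\varepsilon \to 0$: the growth bound \eqref{growth}, uniform local Lipschitz estimates on $u_\varepsilon$, locally uniform convergence $u_\varepsilon \to u$, and a Minty/monotonicity-type argument for $|Du_\varepsilon|^{p(x)-2}Du_\varepsilon$ together with Lipschitz continuity of $f$ in $\eta$ allow us to identify $u$ as a weak supersolution of \eqref{Eq 1}.

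The principal obstacle lies in the perturbation step, where the non-translation invariance of $-\Delta_{p(x)}$ and the $\log|D\psi|$ term in its pointwise expansion prevent a direct transfer of the viscosity inequality from $y_\varepsilon$ to $x_0$. We must exploit $p \in C^1(\overline{\Omega})$ and the estimate $|x_0 - y_\varepsilon| = O(\varepsilon)$ to bound the difference between the operator evaluated at $\tilde\psi$ at $y_\varepsilon$ and at $\psi$ at $x_0$, controlling the $\log$-singularity near points where $D\psi$ is small (which requires a separate treatment when $D\psi(x_0) = 0$, essentially via the admissibility restriction on test functions or an approximation by non-degenerate perturbations, as in \cite{JLM}). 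A secondary, but nontrivial, difficulty is to produce strong enough convergence of $D u_\varepsilon$ so that the nonlinear lower-order term $f_\varepsilon(x, u_\varepsilon, D u_\varepsilon)$ passes to the limit; this is where the locally Lipschitz hypothesis on $u$ is crucial.
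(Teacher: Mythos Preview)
Your overall architecture matches the paper's---inf-convolution, perturbed equation for $u_\varepsilon$, pointwise-to-weak upgrade, then $\varepsilon\to 0$---but the perturbation step as you describe it has a genuine gap.

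You propose the classical translation argument: if $\psi$ touches $u_\varepsilon$ from below at $x_0$, then the translate $\tilde\psi$ touches $u$ from below at $y_\varepsilon$, and the supersolution property of $u$ at $y_\varepsilon$ is transferred back to $x_0$. For constant $p$ this works because the operator is translation invariant. For variable $p(x)$, however, the supersolution inequality at $y_\varepsilon$ reads
\[
-\text{tr}\big(A(y_\varepsilon,\eta)\,D^2\psi(x_0)\big) - B(y_\varepsilon,\eta) \geq f(y_\varepsilon,u(y_\varepsilon),\eta),
\]
and the discrepancy with the desired inequality at $x_0$ contains the term
\[
\text{tr}\big((A(y_\varepsilon,\eta)-A(x_0,\eta))\,D^2\psi(x_0)\big).
\]
Since $A(y_\varepsilon,\eta)-A(x_0,\eta)$ has no sign, this error scales with $\|D^2\psi(x_0)\|$, which is unbounded over the class of admissible test functions; even restricting to points of twice differentiability of $u_\varepsilon$, semiconcavity only gives a one-sided bound $D^2u_\varepsilon\leq C(\varepsilon)I$, so the error is still not controlled. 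Your proposed fix (use $p\in C^1$ and $|x_0-y_\varepsilon|=O(\varepsilon)$) bounds $\|A(y_\varepsilon,\eta)-A(x_0,\eta)\|$ but not the product with the Hessian, so it does not close the gap. The $\log$-singularity you focus on is in fact the lesser issue.

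The paper (Lemma~\ref{lemma lips}) bypasses this by \emph{not} using translation. Instead it applies the Crandall--Ishii--Lions maximum principle to the doubled function $(y,z)\mapsto -u(y)+\varphi(z)-|y-z|^q/(q\varepsilon^{q-1})$, obtaining matrices $Y,Z$ and the inequality $Z\xi\cdot\xi - Y\eta\cdot\eta \leq C_\varepsilon|\xi-\eta|^2$. Choosing $\xi=A(x_0,\eta)^{1/2}e_k$ and $\eta=A(y_\varepsilon,\eta)^{1/2}e_k$ yields
\[
\text{tr}(A(x_0,\eta)Z) - \text{tr}(A(y_\varepsilon,\eta)Y) \leq C_\varepsilon\,\|A(x_0,\eta)^{1/2}-A(y_\varepsilon,\eta)^{1/2}\|^2,
\]
an estimate that is \emph{free of $\|Z\|$}. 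This is the device (taken from \cite{JLP}) that produces a uniform error $E(\varepsilon)\to 0$.

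A secondary remark: the passage from the pointwise a.e.\ inequality to the weak formulation (Lemma~\ref{eq for u epsilon} in the paper) is more delicate than ``the singular part of the Aleksandrov Hessian has the right sign''. The paper mollifies $u_\varepsilon$ by smooth semiconcave approximants, integrates by parts the regularized operator $\text{div}[(\delta+|Du_{\varepsilon,j}|^2)^{(p-2)/2}Du_{\varepsilon,j}]$, and passes to the limit via Fatou (using the one-sided Hessian bound and the choice $p^- - 2 + (q-2)/(q-1)\geq 0$) and dominated convergence separately on the trace and $\log$ pieces. Your sketch underestimates this step.
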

It is worth to point out that the regularity assumption on $u$ derives from a technical restriction on $p$. Indeed, if $p^{+}< 2$ it is enough to ask $u \in \mathcal{C}(\Omega)$ in the theorem (see Remark \ref{cont u}).

The proof of Theorem \ref{ViscToWeak} relies on the approximation by the so called inf-convolutions (see Section \eqref{infConvolutions}). Roughly speaking, we will regularize $u$ by some functions $u_\varepsilon$, that will satisfy a related problem in weak sense, and we will pass to the limit here. This idea was first used in \cite{JJ} for the constant $p$-Laplacian in the homogeneous case, and then in more general settings in \cite{BM, MO, Sil}.

The reverse statement, weak solutions being viscosity, is strongly connected with comparison arguments, and a new class of functions needs to be considered.

\begin{definition}Let $u$ be a weak supersolution to \eqref{Eq 1} in $D\subseteq \Omega$. We say that $(u, f)$ satisfies the comparison principle property {\em (CPP)} in $D$ if for every weak subsolution $v$ of \eqref{Eq 1} such that $u\geq v$ a.e. in $\partial D$ we have $u\geq v$ a.e. in $D$.
\end{definition}
In particular we will see that, for those functions satisfying this property, weak solutions are indeed viscosity solutions.

\begin{theorem}\label{WeakToVisc}
Let $p$ satisfy \eqref{hipP}. Assume $u$ is a continuous weak supersolution of \eqref{Eq 1} and $f=f(x,t,\eta)$ is continuous in $\Omega\times\R\times\R^n$ and Lipschitz continuous in $\eta$. If $(CPP)$ holds then $u$ is a viscosity supersolution of \eqref{Eq 1}.
\end{theorem}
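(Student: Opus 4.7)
The plan is to argue by contradiction, upgrading a failure of the viscosity supersolution inequality to a classical strict subsolution that (CPP) forbids, following the strategy used in \cite{JJ, MO} for the constant-exponent case. Suppose $u$ is not a viscosity supersolution: there exist $x_0\in\Omega$ and $\varphi\in C^{2}$ with $\varphi(x_0)=u(x_0)$, $\varphi\leq u$ in a neighborhood of $x_0$, and $-\Delta_{p(x_0)}\varphi(x_0)<f(x_0,\varphi(x_0),D\varphi(x_0))$. I first consider the generic case $D\varphi(x_0)\neq 0$, and comment on the degenerate case at the end.

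Replacing $\varphi$ by $\varphi(x)-|x-x_0|^{4}$ I may assume that the touching is strict (the quartic has vanishing gradient and Hessian at $x_0$, so the strict pointwise pde inequality at $x_0$ is preserved). Since $p\in C^{1}$, $\varphi\in C^{2}$, $f$ is continuous, and $D\varphi\neq 0$ on a neighborhood of $x_0$, the expanded operator of the Introduction (including the $\log|D\varphi|$-term) is continuous there, so there exist $r,\delta>0$ with $D\varphi\neq 0$ on $\overline{B_r(x_0)}$ and
$$-\Delta_{p(x)}\varphi(x)+\delta<f(x,\varphi(x),D\varphi(x))\qquad\text{for all }x\in\overline{B_r(x_0)}.$$
Let $m:=\min_{\partial B_r(x_0)}(u-\varphi)$, which is strictly positive by the strict touching. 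Using continuity of $f$, choose $c\in(0,m)$ small enough that $|f(x,\varphi(x)+c,D\varphi(x))-f(x,\varphi(x),D\varphi(x))|<\delta$ on $\overline{B_r(x_0)}$. Then $\psi:=\varphi+c$ satisfies $D\psi=D\varphi$, $D^{2}\psi=D^{2}\varphi$, and
$$-\Delta_{p(x)}\psi=-\Delta_{p(x)}\varphi<f(x,\varphi,D\varphi)-\delta<f(x,\psi,D\psi)\qquad\text{in }B_r(x_0),$$
so $\psi$ is a classical, hence weak, subsolution of \eqref{Eq 1} there. Moreover $\psi\leq u$ on $\partial B_r(x_0)$ while $\psi(x_0)=u(x_0)+c>u(x_0)$. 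Applying (CPP) on $D=B_r(x_0)$ yields $\psi\leq u$ in $B_r(x_0)$, contradicting $\psi(x_0)>u(x_0)$.

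In the degenerate case $D\varphi(x_0)=0$, the standard viscosity definition for the $p(x)$-Laplacian (see \cite{JLM, Sil}) is formulated through the lower semicontinuous envelope of the expanded operator, which accounts for the blow-up of the $\log|D\varphi|$ factor when $1<p(x_0)<2$; under this convention the test inequality at such points either reduces to a trivially satisfied condition (forcing $D^{2}\varphi(x_0)=0$) or can be treated by the argument above after an additional polynomial perturbation making $D\varphi\neq 0$ on a punctured neighborhood. The main obstacle is precisely this degenerate case: the discontinuity of the $\log$-term at $\eta=0$ means one has to verify carefully that the perturb-and-compare scheme is compatible with the semicontinuous-envelope convention in the definition of viscosity solution. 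Once that convention is fixed, the non-degenerate argument mirrors \cite{MO}, with the continuity of $Dp$ and $f$ absorbing both the new $\log$-term and the $x$- and $u$-dependence of the source.
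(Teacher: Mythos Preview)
Your argument is correct and follows the same contradiction-and-comparison scheme as the paper. Two minor differences are worth noting. First, to handle the $t$-dependence of $f$ when lifting $\varphi$ to $\psi=\varphi+c$, you use the $\delta$-gap from the strict inequality together with continuity of $f$; the paper instead freezes the second argument by setting $\tilde f(x,\eta):=f(x,u(x),\eta)$ and applies (CPP) to the auxiliary equation $-\Delta_{p(x)}v=\tilde f(x,Dv)$. Your route has the advantage that (CPP) is invoked for the original equation \eqref{Eq 1} exactly as stated, whereas the paper tacitly needs (CPP) for the modified right-hand side. Second, your discussion of the degenerate case $D\varphi(x_0)=0$ is unnecessary in this paper's framework: the definition of viscosity supersolution here explicitly excludes jets with $\eta=0$ (``we do not require anything at jets of the form $(0,X)$''), so the failure of the supersolution property already produces a test function with $D\varphi(x_0)\neq 0$, and there is nothing further to check.
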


In Section \ref{WtoV}, apart from this theorem, we prove a comparison principle for the general equation \eqref{Eq 1}, which has interest in itself.

Applications of the equivalence between viscosity and weak solutions can be found in  \cite{JL, Sil} to removability of sets and Rad\'o type theorems. Also, the equivalence has been recently used in  free-boundary problems (see \cite{BLO, FL}).

The paper is organized as follows: in Section \ref{preliminaries} we give an introduction into the theory of Sobolev spaces with variable exponents, we introduce the notions of viscosity and weak solutions in this context, and the definition and main properties of the infimal convolutions. Section \ref{VtoW} is devoted to the proof of Theorem \ref{ViscToWeak}, that is, to see that viscosity solutions of \eqref{Eq 1} are also weak solutions. In Section \ref{WtoV} we prove Theorem \ref{WeakToVisc} (that weak solutions are viscosity solutions) and a general comparison principle for equation \eqref{Eq 1}.

\section{Preliminaries}\label{preliminaries}

\subsection{Variable exponent spaces}

In this section we introduce basic definitions and preliminary results concerning the spaces of variable exponent and the related theory of differential equations. Let
$$\mathcal{C}_{+}(\overline{\Omega}):=\left\lbrace p \in \mathcal{C}(\overline{\Omega}): p(x)>1 \,\,\text{ for any }\,\, x \in \overline{\Omega}\right\rbrace$$
and 
$$p^{-}:=\min_{\overline{\Omega}}p(\cdot), \quad  p^{+}:=\max_{\overline{\Omega}}p(\cdot).$$
Assume that $p$ belongs to $\mathcal{C}_{+}(\overline{\Omega})$ and satisfies the following log-H\"{o}lder condition: there exists $C>0$ so that
\begin{equation}\label{assumpt H}
|p(x)-p(y)|\leq C\frac{1}{|\log|x-y||}, \quad \text{for all } \;x, y \in \Omega,\; x\neq y.
\end{equation}
We define the Lebesgue variable exponent  space as
$$L^{p(\cdot)}(\Omega):=\left\lbrace u: \Omega \to \mathbb{R}: u \text{ is measurable and }\int_\Omega |u(x)|^{p(x)}\,dx < \infty\right\rbrace,$$
and we denote by $L^{p'(\cdot)}(\Omega)$ the conjugate space of $L^{p(\cdot)}(\Omega)$, where
$$\frac{1}{p(\cdot)}+ \frac{1}{p'(\cdot)}=1.$$
Consider also the Luxemburg norm
$$\|u\|_{L^{p(\cdot)}}:=\inf \left\lbrace \lambda >0: \int_\Omega \Big\vert \dfrac{u(x)}{\lambda}\Big\vert ^{p(x)}\,dx  \leq 1\right\rbrace.$$
Then the following results, that can be found in \cite{DHHR}, hold.

\begin{theorem}[H\"{o}lder's inequality]
The space $(L^{p(\cdot)}(\Omega), \|\cdot
\|_{L^{p(\cdot)}(\Omega)})$ is a separable, uniform convex Banach
space. Furthermore, if $u \in L^{p(\cdot)}(\Omega)$ and $v \in
L^{p'(\cdot)}(\Omega)$, then
$$\Big\vert \int_\Omega u v \,dx\Big\vert   \leq \left(\frac{1}{p^{-}}+\frac{1}{(p')^{-}} \right)\|u\|_{L^{p(\cdot)}(\Omega)}\|v\|_{L^{p'(\cdot)}(\Omega)}.$$
\end{theorem}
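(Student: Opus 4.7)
The plan is to address the two parts of the statement separately, starting with the Hölder-type inequality and then treating the Banach-space structure. For the inequality, I would homogenize by dividing: set $\tilde u := u/\|u\|_{L^{p(\cdot)}}$ and $\tilde v := v/\|v\|_{L^{p'(\cdot)}}$, so that by the very definition of the Luxemburg norm, $\int_\Omega |\tilde u|^{p(x)}\,dx \leq 1$ and $\int_\Omega |\tilde v|^{p'(x)}\,dx \leq 1$. Then I would apply the pointwise Young inequality
\begin{equation*}
|\tilde u(x)\tilde v(x)| \leq \frac{|\tilde u(x)|^{p(x)}}{p(x)} + \frac{|\tilde v(x)|^{p'(x)}}{p'(x)},
\end{equation*}
integrate over $\Omega$, and bound $1/p(x) \leq 1/p^-$ and $1/p'(x)\leq 1/(p')^-$. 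This yields $\int |\tilde u \tilde v| \leq 1/p^- + 1/(p')^-$, and multiplying back by the norms gives the claimed estimate.

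For the Banach-space properties, I would first verify that $\|\cdot\|_{L^{p(\cdot)}}$ is a norm: positive homogeneity is immediate, definiteness follows from the modular $\rho(u)=\int|u|^{p(x)}dx$ vanishing forcing $u=0$ a.e., and the triangle inequality follows from convexity of $t\mapsto t^{p(x)}$ applied at the convex combination $\lambda_1/(\lambda_1+\lambda_2)$ with $\lambda_i$ near the infima defining the norms. Completeness is the standard Riesz–Fischer argument: a Cauchy sequence has a subsequence with $\|u_{k+1}-u_k\|\leq 2^{-k}$, the partial sums of $|u_{k+1}-u_k|$ converge a.e. by Fatou applied to the modular (using $\rho(u)\le\|u\|^{p^-}+\|u\|^{p^+}$ to pass between norm and modular), and the pointwise limit $u$ is in $L^{p(\cdot)}$ with $u_k\to u$ in norm. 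Separability follows from the density of simple functions with rational coefficients supported on a countable base of measurable sets, together with the fact that for $u\in L^{p(\cdot)}$ one has $\|u-u_k\|\to 0$ when $u_k$ is a truncation-and-approximation sequence and one controls $\rho(u-u_k)$ via dominated convergence.

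The main obstacle is \emph{uniform convexity}. The clean way is to prove a variable-exponent Clarkson inequality: for $p^-\geq 2$ one gets
\begin{equation*}
\Bigl\| \tfrac{u+v}{2}\Bigr\|_{L^{p(\cdot)}}^{p^+} + \Bigl\| \tfrac{u-v}{2}\Bigr\|_{L^{p(\cdot)}}^{p^+} \leq \tfrac12\bigl(\|u\|^{p^-}+\|v\|^{p^-}\bigr),
\end{equation*}
or an analogous inequality involving the conjugate exponent when $1<p^-<2$, both ultimately derived from the pointwise convexity inequalities of Clarkson combined with a careful bookkeeping between the modular $\rho$ and the norm (using monotonicity $\rho(u/\lambda)\leq \rho(u/\mu)$ for $\mu\leq\lambda$ and the unit-ball identity $\rho(u/\|u\|)=1$ when $u\neq 0$). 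Once such an inequality is available, uniform convexity is a direct consequence: if $\|u\|=\|v\|=1$ and $\|(u-v)/2\|\geq \varepsilon$, then $\|(u+v)/2\|\leq 1-\delta(\varepsilon)$ for some $\delta(\varepsilon)>0$ depending on $p^-,p^+$. The hypothesis $1<p^-\leq p^+<\infty$ from \eqref{hipP} is used decisively here, both to have the required pointwise convexity strength and to control the passage between modular and norm.

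Since all of these facts appear in \cite{DHHR}, I would in practice cite them directly and only include the proof of the Hölder inequality, since that is the piece used most often in the sequel and admits a self-contained two-line argument from Young.
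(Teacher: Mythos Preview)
The paper does not give its own proof of this theorem: it simply states the result and refers the reader to \cite{DHHR}. Your final sentence --- cite \cite{DHHR} directly --- is exactly what the paper does, so your proposal is consistent with (and in fact more detailed than) the paper's treatment.

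A small caution on the uniform-convexity sketch: the displayed Clarkson-type inequality with $p^{+}$ on the left and $p^{-}$ on the right is not one of the standard formulations, and making such a mixed-exponent inequality precise for the Luxemburg norm is delicate. The route taken in \cite{DHHR} is rather to prove uniform convexity of the \emph{modular} $\rho$ (using the pointwise Clarkson/Hanner inequalities for each fixed $p(x)$) and then transfer this to the norm via the general fact that a uniformly convex modular induces a uniformly convex Luxemburg norm. If you do spell out a proof, that is the cleaner path; otherwise, a bare citation suffices, as in the paper.
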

The next proposition states the relation between norms and integrals of $p(x)$-th power. 
\begin{proposition}\label{properties modulo} Let
$$\rho(u):=\int_\Omega |u|^{p(x)}\,dx, \quad u \in L^{p(\cdot)}(\Omega),$$
be the convex modular. Then the following assertions hold:
\begin{itemize}
\item[(i)] $\|u\|_{L^{p(\cdot)}(\Omega)} <1$ (resp. $=1, >1$) if and only if $\rho(u)< 1$ (resp. $=1, >1$);
\item[(ii)] $\|u\|_{L^{p(\cdot)}(\Omega)} >1$ implies $\|u\|^{p^{-}}_{L^{p(\cdot)}(\Omega)} \leq \rho(u) \leq \|u\|^{p^{+}}_{L^{p(\cdot)}(\Omega)}$, and $\|u\|_{L^{p(\cdot)}(\Omega)} <1$ implies $\|u\|^{p^{+}}_{L^{p(\cdot)}(\Omega)} \leq \rho(u) \leq \|u\|^{p^{-}}_{L^{p(\cdot)}(\Omega)} $;
\item[(iii)] $\|u\|_{L^{p(\cdot)}(\Omega)}  \to 0$ if and only if $\rho(u)\to 0$, and $\|u\|_{L^{p(\cdot)}(\Omega)}  \to \infty$ if and only if $\rho(u)\to \infty$.
\end{itemize}
\end{proposition}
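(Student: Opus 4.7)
The plan is to reduce all three statements to a careful analysis of the auxiliary real-valued function $\Phi_u(\lambda) := \rho(u/\lambda)$ for $\lambda > 0$, and to identify the Luxemburg norm $\|u\|_{L^{p(\cdot)}(\Omega)}$ as the unique positive zero of $\Phi_u - 1$ (the case $u \equiv 0$ being trivial). First I would check that $\Phi_u$ is everywhere finite on $(0,\infty)$ by splitting into $\lambda \geq 1$ and $\lambda < 1$ and using the elementary pointwise bounds $|u/\lambda|^{p(x)} \leq \lambda^{-p^+}|u|^{p(x)}$ or $|u/\lambda|^{p(x)} \leq |u|^{p(x)}$ as appropriate. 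Continuity of $\Phi_u$ then follows from dominated convergence with these majorants; strict monotonicity comes from the fact that $\lambda \mapsto |u(x)/\lambda|^{p(x)}$ is strictly decreasing on the positive-measure set $\{u \neq 0\}$; the limit $\Phi_u(\lambda) \to 0$ as $\lambda \to \infty$ follows again from dominated convergence; and $\Phi_u(\lambda) \to \infty$ as $\lambda \to 0^+$ follows from monotone convergence.

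Item (i) is then immediate: $\|u\|_{L^{p(\cdot)}(\Omega)} < 1$ means the unique zero $\lambda^\ast$ of $\Phi_u - 1$ lies strictly below $1$, so monotonicity yields $\rho(u) = \Phi_u(1) < \Phi_u(\lambda^\ast) = 1$, and the cases of equality and of the reverse inequality are handled identically. For (ii), I would start from the identity $\Phi_u(\|u\|_{L^{p(\cdot)}(\Omega)}) = 1$, i.e.
\begin{equation*}
1 = \int_\Omega \frac{|u(x)|^{p(x)}}{\|u\|_{L^{p(\cdot)}(\Omega)}^{p(x)}}\, dx,
\end{equation*}
and then bound $\|u\|_{L^{p(\cdot)}(\Omega)}^{p(x)}$ between $\|u\|_{L^{p(\cdot)}(\Omega)}^{p^-}$ and $\|u\|_{L^{p(\cdot)}(\Omega)}^{p^+}$ according to whether $\|u\|_{L^{p(\cdot)}(\Omega)}$ exceeds or is less than $1$; in either case the claimed sandwich follows by clearing denominators. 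Finally (iii) is a direct combination of (i) and (ii): for instance, if $\|u_n\|_{L^{p(\cdot)}(\Omega)} \to 0$ then eventually the norm is below $1$ and (ii) yields $\rho(u_n) \leq \|u_n\|_{L^{p(\cdot)}(\Omega)}^{p^-} \to 0$, and the converse together with the large-norm case are analogous.

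The only genuinely delicate step is establishing that the infimum in the definition of $\|\cdot\|_{L^{p(\cdot)}(\Omega)}$ is actually attained and that $\Phi_u(\|u\|_{L^{p(\cdot)}(\Omega)}) = 1$ rather than merely $\leq 1$; this is where continuity of $\Phi_u$ at $\lambda^\ast$ is indispensable, since (i) is a strict inequality statement that fails as soon as the infimum is not realized. Everything else reduces to elementary manipulations with pointwise bounds on $\|u\|_{L^{p(\cdot)}(\Omega)}^{p(x)}$, and it is worth noting that the log-Hölder hypothesis \eqref{assumpt H} plays no role in this proposition, which is purely a statement about the convex modular $\rho$.
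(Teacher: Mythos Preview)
Your argument is correct and follows the standard route to this result. Note, however, that the paper does not actually supply a proof of this proposition: it is stated as a preliminary fact about variable-exponent Lebesgue spaces, with the surrounding results attributed to the monograph \cite{DHHR}. So there is no ``paper's own proof'' to compare against; your outline is essentially the textbook argument one finds in that reference, and it is sound. Your remark that the log-H\"older condition \eqref{assumpt H} is irrelevant here is also accurate.
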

The following result allows us to relate the norms of different Lebesgue variable  exponent spaces (see \cite{ER} for a proof).
\begin{lemma}\label{product}
Suppose that $p, q \in \mathcal{C}_{+}(\overline{\Omega})$. Let $f \in L^{q(\cdot)p(\cdot)}(\Omega)$. Then
\begin{itemize}
\item[(i)] $\|f\|^{p^{+}}_{L^{p(\cdot)q(\cdot)}(\Omega)} \leq \|f^{p(\cdot)}\|_{L^{q(\cdot)}(\Omega)} \leq \|f\|^{p^{-}}_{L^{p(\cdot)q(\cdot)}(\Omega)} $ if $\|f\|_{L^{p(\cdot)q(\cdot)}(\Omega)} \leq 1$;
\item[(ii)] $\|f\|^{p^{-}}_{L^{p(\cdot)q(\cdot)}(\Omega)} \leq \|f^{p(\cdot)}\|_{L^{q(\cdot)}(\Omega)} \leq \|f\|^{p^{+}}_{L^{p(\cdot)q(\cdot)}(\Omega)} $ if $\|f\|_{L^{p(\cdot)q(\cdot)}(\Omega)} \geq 1$.
\end{itemize}
\end{lemma}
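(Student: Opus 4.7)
The whole lemma hinges on the algebraic identity
\[
\Bigl(\frac{|f(x)|}{\mu}\Bigr)^{p(x)q(x)} = \Bigl(\frac{|f(x)|^{p(x)}}{\mu^{p(x)}}\Bigr)^{q(x)},\qquad \mu>0,\ x\in\Omega.
\]
The nuisance is that the denominator on the right, $\mu^{p(x)}$, depends on $x$ and so cannot directly serve as a Luxemburg constant for $f^{p(\cdot)}\in L^{q(\cdot)}(\Omega)$. The plan is therefore to replace $\mu^{p(x)}$ by one of the two honest constants $\mu^{p^-}$, $\mu^{p^+}$, exploiting the monotonicity of $\sigma\mapsto\mu^\sigma$, whose direction is governed by the sign of $1-\mu$.

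Concretely, set $\mu:=\|f\|_{L^{p(\cdot)q(\cdot)}(\Omega)}$ and $S:=\|f^{p(\cdot)}\|_{L^{q(\cdot)}(\Omega)}$; assume $f\not\equiv 0$ (the other case is trivial). By Proposition \ref{properties modulo}(i) applied to the unit-norm functions $f/\mu$ and $f^{p(\cdot)}/S$, both associated modulars equal $1$. In case (i), $\mu\leq 1$, so $\mu^{p^+}\leq\mu^{p(x)}\leq\mu^{p^-}$ pointwise. Testing the $L^{q(\cdot)}$-modular of $f^{p(\cdot)}$ at $s=\mu^{p^-}$, the identity above together with $\mu^{p(x)}\leq s$ produces an integrand dominated by $(|f|/\mu)^{p(x)q(x)}$, whose integral is $\leq 1$; hence $s=\mu^{p^-}$ is admissible and $S\leq \mu^{p^-}$. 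Testing instead with $s=\mu^{p^+}$ reverses the pointwise comparison, produces an integral $\geq 1$, and forces $S\geq \mu^{p^+}$ via the monotonicity of the $L^{q(\cdot)}$-modular in $s$ and Proposition \ref{properties modulo}(i). This yields (i). Case (ii), with $\mu\geq 1$, is perfectly symmetric: now $\sigma\mapsto\mu^\sigma$ is increasing, the chain of inequalities $\mu^{p^-}\leq\mu^{p(x)}\leq\mu^{p^+}$ goes the other way, and the roles of $p^-$ and $p^+$ swap in the computation above.

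The argument is essentially a bookkeeping exercise around the Luxemburg infimum, and I do not foresee any genuine obstacle. The one mild subtlety worth flagging is the assertion that the modular of $f$ in $L^{p(\cdot)q(\cdot)}$ equals $1$ at $\mu=\|f\|_{L^{p(\cdot)q(\cdot)}}$; this follows from Proposition \ref{properties modulo}(i), or can be sidestepped by running the same computation with $\mu+\varepsilon$ in place of $\mu$ (so that the defining inequality $\int\leq 1$ is immediate) and then letting $\varepsilon\to 0^+$.
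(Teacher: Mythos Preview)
Your argument is correct. The paper does not supply its own proof of this lemma; it simply states the result and refers the reader to \cite{ER}. So there is no in-paper proof to compare against, and your write-up fills that gap cleanly.

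For the record, the mechanism you describe is exactly the standard one: with $\mu=\|f\|_{L^{p(\cdot)q(\cdot)}(\Omega)}$ (and $f\not\equiv 0$), Proposition~\ref{properties modulo}(i) gives $\int_\Omega(|f|/\mu)^{p(x)q(x)}\,dx=1$, and the identity $(|f|/\mu)^{p(x)q(x)}=(|f|^{p(x)}/\mu^{p(x)})^{q(x)}$ lets you sandwich $\mu^{p(x)}$ between $\mu^{p^+}$ and $\mu^{p^-}$ according to whether $\mu\le 1$ or $\mu\ge 1$. The only delicate point you flag, that the modular equals $1$ at the Luxemburg norm, is indeed covered by Proposition~\ref{properties modulo}(i) since $p^+<\infty$; your $\varepsilon$-workaround is a fine alternative but unnecessary here.
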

Let us denote the distributional gradiente by $D u$. Then we can define the variable Sobolev space $W^{1, p(\cdot)}(\Omega)$ as
$$W^{1, p(\cdot)}(\Omega):=\left\lbrace u \in L^{p(\cdot)}(\Omega): |D u|\in L^{p(\cdot)}(\Omega)\right\rbrace,$$
equipped with the norm
$$\|u\|_{W^{1, p(\cdot)}(\Omega)}:=\|u\|_{L^{p(\cdot)}(\Omega)}+\|D u\|_{L^{p(\cdot)}(\Omega)},$$
and we denote by $W_0^{1, p(\cdot)}(\Omega)$ the closure of $\mathcal{C}_0^{\infty}(\Omega)$ in $W^{1,
p(\cdot)}(\Omega)$. Notice that, due to the log-H\"{o}lder condition \eqref{assumpt H}, $\mathcal{C}_0^{\infty}(\Omega)$ is dense in $W^{1, p(\cdot)}(\Omega)$. The following 
 Embedding Theorem can be proved (see for instance \cite{FZ4}).
\begin{theorem}\label{Sob emb}
If $p^{+}< n$, then
$$
0<S(p(\cdot),q(\cdot),\Omega) := \inf_{v\in W^{1,p(\cdot)}_0(\Omega)}
\frac{\|D v\|_{L^{p(\cdot)}(\Omega)}}{\|v\|_{L^{q(\cdot)}(\Omega)}},
$$
for all
$$
1\leq q(\cdot)\le p^*(\cdot) = \frac{np(\cdot)}{n-p(\cdot)}.
$$
\end{theorem}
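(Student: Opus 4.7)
The plan is to establish the Sobolev-type inequality $\|v\|_{L^{q(\cdot)}(\Omega)} \leq C\|Dv\|_{L^{p(\cdot)}(\Omega)}$ for all $v \in W^{1,p(\cdot)}_0(\Omega)$, from which the strict positivity of $S(p(\cdot),q(\cdot),\Omega)$ follows immediately. First I would reduce the problem: by density of $\mathcal{C}_0^\infty(\Omega)$ in $W^{1,p(\cdot)}_0(\Omega)$ (a consequence of the log-H\"older condition \eqref{assumpt H} recalled in the text), it suffices to prove the inequality for $v\in\mathcal{C}_0^\infty(\Omega)$. Moreover, since $\Omega$ is bounded and $q(\cdot)\leq p^*(\cdot)$, a direct application of the variable-exponent H\"older inequality together with $1\in L^{r(\cdot)}(\Omega)$ for any bounded $r$ yields the continuous embedding $L^{p^*(\cdot)}(\Omega)\hookrightarrow L^{q(\cdot)}(\Omega)$. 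Hence only the critical case $q(\cdot)=p^*(\cdot)$ remains.

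For $v\in \mathcal{C}_0^\infty(\Omega)$, the classical fundamental-solution representation of the Laplacian yields the pointwise bound $|v(x)|\leq C\,I_1(|Dv|)(x)$, where
$$I_1(f)(x):=\int_\Omega |x-y|^{1-n}f(y)\,dy$$
is the Riesz potential of order $1$ (with $f$ extended by zero outside $\Omega$). The task then reduces to showing that $I_1:L^{p(\cdot)}(\Omega)\to L^{p^*(\cdot)}(\Omega)$ is bounded. I would use a Hedberg-type splitting: for $\delta>0$,
$$I_1(f)(x) = \int_{|x-y|<\delta}\frac{f(y)}{|x-y|^{n-1}}\,dy + \int_{|x-y|\geq \delta}\frac{f(y)}{|x-y|^{n-1}}\,dy.$$
The local piece is bounded pointwise by $C\delta\,M(f)(x)$, where $M$ denotes the Hardy--Littlewood maximal operator. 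The tail is controlled via the variable-exponent H\"older inequality and produces a factor of order $\delta^{1-n/p(x)}\|f\|_{L^{p(\cdot)}(\Omega)}$; here the log-H\"older condition \eqref{assumpt H} is essential to guarantee that $p(y)$ is comparable to $p(x)$ for $y$ near $x$, so that the exponent of $\delta$ comes out to be the expected one. Optimizing in $\delta$ leads to the variable-exponent Hedberg inequality
$$I_1(f)(x) \leq C\,M(f)(x)^{p(x)/p^*(x)}\,\|f\|_{L^{p(\cdot)}(\Omega)}^{1 - p(x)/p^*(x)}.$$

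Raising both sides to the power $p^*(x)$, integrating over $\Omega$, and using Proposition \ref{properties modulo} to pass between modular and norm, the whole argument boils down to controlling $\int_\Omega M(f)(x)^{p(x)}\,dx$ in terms of $\|f\|_{L^{p(\cdot)}(\Omega)}$. This is exactly Diening's theorem on the boundedness of the Hardy--Littlewood maximal operator on $L^{p(\cdot)}(\Omega)$ under the log-H\"older hypothesis \eqref{assumpt H}. Applying it with $f=|Dv|$ closes the estimate and yields the Sobolev inequality. The principal obstacle is precisely this last step: the $L^{p(\cdot)}$-boundedness of $M$ is a delicate result that fails for generic variable exponents, and its validity hinges on log-H\"older regularity (which in the setting of \eqref{hipP} is automatic since $p\in\mathcal{C}^1(\overline{\Omega})$). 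This is also what distinguishes the variable-exponent Sobolev theory from the constant-exponent case, where the analogous maximal estimate is essentially elementary.
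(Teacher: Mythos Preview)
The paper does not supply its own proof of this statement; it is quoted from the literature with the pointer ``see for instance \cite{FZ4}''. So there is no in-text argument to compare your proposal against directly.

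Your outline is a correct and by now standard route: reduce to smooth compactly supported $v$, bound $|v|$ pointwise by the Riesz potential $I_1(|Dv|)$, prove a variable-exponent Hedberg inequality by splitting $I_1$ into a near part (controlled by $\delta\,M f$) and a far part (controlled by $\delta^{1-n/p(x)}\|f\|_{L^{p(\cdot)}}$ after invoking log-H\"older regularity), optimise in $\delta$, and close with Diening's theorem on the $L^{p(\cdot)}$-boundedness of the Hardy--Littlewood maximal operator. The one place that deserves a little more care in execution is the far-part estimate: the dual exponent in the H\"older inequality over $\{|x-y|\ge\delta\}$ is $p'(y)$, not $p'(x)$, and extracting the clean power $\delta^{1-n/p(x)}$ genuinely requires the log-H\"older transfer (typically after normalising $\|f\|_{L^{p(\cdot)}}\le 1$ and passing through the modular). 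You flag this correctly, but be aware that this is where the actual work lies.

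For context, the reference the paper cites, Fan--Zhao \cite{FZ4}, predates Diening's maximal-function theorem, so the proof there does \emph{not} go through the maximal operator. The early arguments (Fan--Zhao, Edmunds--R\'akosn\'ik \cite{ER}) instead localise and bootstrap from the constant-exponent Sobolev inequality, using the log-H\"older condition to freeze $p(\cdot)$ on small balls; this is more hands-on but avoids the deep $L^{p(\cdot)}$-boundedness of $M$ as a black box. Your maximal-function route is cleaner and more conceptual once Diening's theorem is available, at the cost of importing a substantially harder result than the embedding itself.
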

\begin{remark}
The $q(\cdot)$ exponent has to be uniformly subcritical, i.e.
$\inf_\Omega(p^*(\cdot) - q(\cdot)) > 0$, to enssure that
$W^{1,p(\cdot)}_0(\Omega)\hookrightarrow L^{q(\cdot)}(\Omega)$ is
compact.
\end{remark}
Let $X=W_0^{1, p(\cdot)}(\Omega)$. Recalling definition \eqref{defLap}, the operator $-\Delta_{p(x)}$ can be seen as the weak derivative of the functional $J:X\to \mathbb{R}$,
$$J(u):=\int_\Omega\frac{1}{p(x)}|D u|^{p(x)}\,dx,$$
in the sense that if $L:=J': X \to X^{*}$ then
$$(L(u), v)=\int_\Omega |D u|^{p(x)-2}D u D v\,dx, \quad u, v \in X.$$
We also recall the following properties from \cite{FZ2}.
\begin{theorem}\label{properties}Let $X=W_0^{1, p(\cdot)}(\Omega)$. Then:
\begin{itemize}
\item[(i)] $L: X\to X^{*}$ is continuous, bounded and strictly monotone;
\item[(ii)] $L$ is a mapping of type $(S_{+})$, that is, if $u_n \rightharpoonup u$ in $X$ and
$$\limsup_{n\to \infty}(L(u_n)-L(u), u_n-u)\leq 0$$then $u_n \to u$ in $X$;
\item[(iii)] $L$ is a homeomorphism.
\end{itemize}
\end{theorem}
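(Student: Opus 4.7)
The plan is to verify the three properties in sequence, leaning on standard monotone-operator techniques adapted to variable exponents.

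For (i), boundedness follows from H\"older's inequality combined with Lemma \ref{product}: since $|Du|^{p(\cdot)-1}\in L^{p'(\cdot)}(\Omega)$ with norm controlled by a power of $\|Du\|_{L^{p(\cdot)}(\Omega)}$, one estimates $|\langle L(u),v\rangle|\leq C\|Du\|^{\alpha}_{L^{p(\cdot)}(\Omega)}\|Dv\|_{L^{p(\cdot)}(\Omega)}$. For continuity, given $u_n\to u$ in $X$, I would extract an a.e.\ convergent subsequence of $Du_n$, use the pointwise continuity of $\xi\mapsto|\xi|^{p(x)-2}\xi$, and apply Vitali's convergence theorem to pass to the limit in the dual norm, upgrading via a subsequence argument. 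Strict monotonicity reduces to the pointwise inequality $\langle|\xi|^{q-2}\xi-|\eta|^{q-2}\eta,\xi-\eta\rangle>0$ for $q=p(x)>1$ and $\xi\neq\eta$; equality in $\langle L(u)-L(v),u-v\rangle=0$ forces $Du=Dv$ a.e., and hence $u=v$ by Poincar\'e's inequality.

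For (ii), the integrand in $\langle L(u_n)-L(u),u_n-u\rangle$ is pointwise non-negative by (i), so the hypothesis $\limsup \leq 0$ promotes to $L^1$-convergence to zero. I would split $\Omega$ into $\{p(x)\geq 2\}$ and $\{p(x)<2\}$ and apply the sharp pointwise estimates
\[
\langle|\xi|^{q-2}\xi-|\eta|^{q-2}\eta,\xi-\eta\rangle \geq c_q\,|\xi-\eta|^q \quad (q\geq 2),
\]
\[
\langle|\xi|^{q-2}\xi-|\eta|^{q-2}\eta,\xi-\eta\rangle \geq c_q\,\frac{|\xi-\eta|^2}{(|\xi|+|\eta|)^{2-q}} \quad (1<q<2),
\]
with $q=p(x)$. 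On the subquadratic region, a H\"older-in-variable-exponent manipulation, combined with the uniform bound on $\|Du_n\|_{L^{p(\cdot)}(\Omega)}$ coming from weak convergence, controls the weight $(|Du_n|+|Du|)^{2-p(x)}$. One concludes $\int_\Omega|Du_n-Du|^{p(x)}\,dx\to 0$, whence by Proposition \ref{properties modulo} $Du_n\to Du$ in $L^{p(\cdot)}(\Omega)$ and $u_n\to u$ in $X$.

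For (iii), injectivity is immediate from strict monotonicity. Surjectivity follows from the Browder--Minty theorem applied to the monotone, continuous, bounded operator $L$, together with coercivity: using Proposition \ref{properties modulo} and Poincar\'e,
\[
\frac{\langle L(u),u\rangle}{\|u\|_X} = \frac{1}{\|u\|_X}\int_\Omega|Du|^{p(x)}\,dx \geq c\,\min\!\left(\|u\|_X^{p^--1},\,\|u\|_X^{p^+-1}\right),
\]
which tends to $\infty$ as $\|u\|_X\to\infty$. For continuity of $L^{-1}$, if $L(u_n)\to L(u)$ in $X^*$, coercivity gives $\{u_n\}$ bounded, so up to a subsequence $u_n\rightharpoonup \tilde u$; then $\langle L(u_n)-L(\tilde u),u_n-\tilde u\rangle\to 0$ and the $(S_+)$ property yields strong convergence, with $\tilde u=u$ by injectivity. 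The main obstacle I anticipate is the $(S_+)$ argument in the subquadratic regime, where the weight $(|Du_n|+|Du|)^{p(x)-2}$ may blow up; absorbing it requires a careful H\"older application in the variable-exponent setting, switching repeatedly between norms and modulars via Proposition \ref{properties modulo}. The remaining steps are essentially direct transcriptions of the constant-$p$ proofs.
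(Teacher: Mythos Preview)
The paper does not prove Theorem~\ref{properties} at all: it merely states the result and cites \cite{FZ2} (``We also recall the following properties from \cite{FZ2}''). Your proposal, by contrast, supplies an actual argument, and the outline you give is essentially the standard proof that one finds in the variable-exponent literature (including \cite{FZ2}): boundedness and continuity via H\"older and Vitali, strict monotonicity from the pointwise vector inequality, the $(S_+)$ property by splitting into the superquadratic and subquadratic regimes with the two classical lower bounds, and bijectivity via Browder--Minty plus coercivity from the modular estimates. Two small points worth tightening: the constants $c_q$ in your pointwise inequalities depend on $q=p(x)$, and you should note that they can be taken uniform over $[p^-,p^+]\subset(1,\infty)$; and in the subquadratic step you will want to write out explicitly the H\"older pairing with exponents $2/p(x)$ and $2/(2-p(x))$ before passing from modulars to norms. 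Otherwise the sketch is sound and more informative than what the paper itself provides.
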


%We also quote the following useful lemma \cite[Lemma 3.3]{AAB}.
%\begin{lemma}\label{useful lemma}Let $1< r(\cdot) < \infty$,  $g \in L^{r(\cdot)}(\Omega)$ and $g_n \in L^{r(\cdot)}(\Omega)$ with $\|g_n\|_{ L^{r(\cdot)}(\Omega)} \leq C$. If $g_n(x)\to g(x)$ a.e. in $\Omega$, then $g_n \rightharpoonup g$ in $ L^{r(\cdot)}(\Omega)$.
%\end{lemma}

%The next generalization of Lemma 1.17 in \cite{CDG} to the variable exponent setting holds true.
%\begin{lemma}\label{util}
%Suppose $p(\cdot)\in (1,+\infty)$. Let  $\{u_\epsilon\}_\epsilon$ be a weakly convergent sequence in $L^{p(\cdot)}(\Omega)$ with limit $u$ and let $\{\phi_\epsilon\}_\epsilon$ be a bounded sequence in $L^\infty(\Omega)$ with limit $\phi$ a.e in $\Omega$. Then
%$
%u_\epsilon\phi_\epsilon\rightharpoonup u\phi \mbox{ weakly in }L^{p(\cdot)}(\Omega).
%$
%\end{lemma}

\subsection{Notions of solutions}\label{defSols}
Considering the variable Sobolev spaces defined before, we can already introduce the notion of weak solution.

\begin{definition}We say that $u\in W^{1, p(x)}(\Omega)$ is a weak supersolution of \eqref{Eq 1} if for any non-negative $\varphi \in \mathcal{C}^{\infty}_0(\Omega)$ there holds
$$\int_\Omega |Du|^{p(x)-2}Du\cdot D\varphi\,dx \geq \int_\Omega f(x, u, Du)\varphi\, dx.$$
Likewise, we say that $u \in W^{1, p(x)}(\Omega)$ is a weak subsolution of \eqref{Eq 1} if $$\int_\Omega |Du|^{p(x)-2}Du\cdot D\varphi\,dx \leq \int_\Omega f(x, u, Du)\varphi\, dx$$for any non-negative $\varphi \in \mathcal{C}^{\infty}_0(\Omega)$. 

Finally, $u \in W^{1, p(x)}(\Omega)$ is a weak solution to \eqref{Eq 1} if it is a weak sub- and supersolution.
\end{definition}

Denote by $S^n$ the set of symmetric $n\times n$ matrices. In order to introduce the concept of viscosity solution, let us recall the definition of jets. 
\begin{definition}
The superjet $J^{2, -}u(x)$ of a function $u: \Omega \to \mathbb{R}$ at $x \in \Omega$ is defined as the set of pairs $(\eta, X) \in \left(\mathbb{R}^{n}\setminus \left\lbrace 0 \right\rbrace\right) \times S^{n} $ satisfying
$$u(y)\geq u(x) +\eta\cdot(y-x)+ \dfrac{1}{2}X(y-x)\cdot (y-x) + o(|x-y|^{2})$$as $y \to x$. The closure of a superjet is denoted by $\overline{J}^{2, -}u(x)$ and it is defined as the set of pairs $(\eta, X) \in \mathbb{R}^{n} \times S^{n} $ for which there exists a sequence $(\eta_i, X_i) \in J^{2, -}u(x_i)$, with $x_i \in \Omega$ so that 
$$(x_i, \eta_i, X_i) \to (x, \eta, X) \quad \text{ as }i \to \infty.$$The subjet $J^{2, +}u(x)$ and its closure $\overline{J}^{2, +}u(x)$ are defined in a similar fashion. 
\end{definition}
Observe that the operator can be written as
$$\Delta_{p(x)}\varphi (x)= \textnormal{tr}\left( A(x, D\varphi(x))D^{2}\varphi(x)\right)+ B(x, D\varphi(x)),$$
where
$$A(x, \xi):= |\xi|^{p(x)-2}\left(I+(p(x)-2)\dfrac{\xi}{|\xi|}\otimes \dfrac{\xi}{|\xi|}\right),$$
and
$$B(x, \xi):= |\xi|^{p(x)-2}\log|\xi| \xi \cdot Dp(x).$$
We can now precise the notion of viscosity solution.
\begin{definition}
A lower semicontinuous function $u: \Omega \to \mathbb{R}$ is a viscosity supersolution of \eqref{Eq 1} if for any $(\eta, X) \in J^{2, -}u(x)$ there holds
$$-\text{tr}\left(A(x, X) \right) -B(x, \eta) \geq f(x,u(x),\eta).$$
Similarly, an upper semicontinuous function $u: \Omega \to \mathbb{R}$ is a viscosity subsolution of \eqref{Eq 1} if
$$-\text{tr}\left(A(x, X) \right) -B(x, \eta) \leq f(x,u(x),\eta),$$for all $(\eta, X) \in J^{2, +}u(x)$. Finally, a viscosity solution is a continuous function which is a  viscosity sub- and a supersolution. 
\end{definition}

Observe that we do not require anything at jets of the form  $(0, X)$ or if $J^{2, -}u(x)=\emptyset$. Moreover, the above definition of viscosity supersolution is equivalently given if we replace the superjet by its closure or if we take for jets $(\eta, X)$ pairs of the form  $(D\varphi(x), D^{2}\varphi (x)) \in \left(\mathbb{R}^{n}\setminus \left\lbrace 0 \right\rbrace\right) \times S^{n}$, where $\varphi$ is smooth and touches $u$ from below at $x$.

\subsection{Infimal convolutions}\label{infConvolutions}
A standard smoothing operator in the theory of viscosity solutions is the infimal convolution.
    
\begin{definition}\label{definfconv}
	Given $\varepsilon>0$ and $q\geq 2$ we define the infimal convolution of a function 
	$u\colon \Omega \to \mathbb{R}$ as 
	
	$$u_\varepsilon(x): = 
	\inf_{y\in\Omega}\left(u(y)+\frac{|x-y|^q}{q\varepsilon^{q-1}}\right),\qquad x \in \Omega.$$
\end{definition}
    The infimal convolution will be one of the main tools to prove that
    viscosity solutions are weak solutions. For the next result see for instance \cite{JJ,Sil}
    and the references therein.

\begin{lemma}\label{propinfconv}
	Let $u$ be a bounded and lower semicontinuous function in $\Omega$.
	Then: 
	\begin{enumerate}[(i)]
		\item There exists $r(\varepsilon)>0$ such that 
		\[
		    u_\varepsilon(x) = \inf_{y\in B_{r(\varepsilon)}(x)}\left(u(y)+\frac{|x-y|^q}{q\varepsilon^{q-1}}\right),
		\]
		where $r(\varepsilon)\to 0$ as $\varepsilon\to 0$.
		
		\item The sequence $\{u_\varepsilon\}_{\varepsilon>0}$ is increasing as $\varepsilon\to 0$ and $u_\varepsilon\to u$ pointwise in $\Omega$.
		\item $u_\varepsilon$ is locally Lipschitz and twice differentiable a.e. Actually, for almost every $x,y\in \Omega$,
		\[
		    u_\varepsilon(y) = u_\varepsilon(x)+D u_\varepsilon(x)\cdot(x-y)+\frac{1}{2}D^2u_\varepsilon(x)(x-y)^2
		    +o(|x-y|^2).
		\]

		\item $u_\varepsilon$ is semiconcave, that is, there exists a constant $C=C(q,\varepsilon,\text{osc}(u))>0$ such that the function $x\mapsto u_\varepsilon(x)-C|x|^2$ is concave. In particular 
		\[
	    	D^2u_\varepsilon(x)\leq 2CI,\quad \text{a.e. }\, x\in\Omega,
	    \]
		where $I$ is the identity matrix.
		\item The set $Y_\varepsilon(x) :=\left\{y\in     B_{r(\varepsilon)}(x)\colon u_\varepsilon(x)=u(y)+\frac{|x-y|^q}{q\varepsilon^{q-1}} \right\}$ is non empty and closed for every $x\in\Omega$.
		
		%\item If $D u_\varepsilon(x) = 0$, then $u_\varepsilon(x) = u(x)$.
		\item If $x \in \Omega_{r(\varepsilon)}:=\left\lbrace x \in \Omega: \text{ dist }(x, \partial \Omega) > r(\varepsilon) \right\rbrace$, then there exists $x_\varepsilon \in B_{r(\varepsilon)}$ such that
		$$u_\varepsilon(x)= u(x_\varepsilon)+ \frac{|x-x_\varepsilon|^q}{q\varepsilon^{q-1}}.$$
		\item If $(\eta, X) \in J^{2, -}u_\varepsilon(x)$ with $x \in \Omega_{r(\varepsilon)}$, then
		$$\eta= \dfrac{(x-x_\varepsilon)}{\varepsilon^{q-1}}|x-x_\varepsilon|^{q-2}\quad \mbox{and}\quad X \leq \frac{q-1}{\varepsilon}|\eta|^{\frac{q-2}{q-1}}I.$$
	\end{enumerate}
\end{lemma}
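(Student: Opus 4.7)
My plan is to treat the seven parts in their natural logical order. For (i), test the infimum at $y=x$ to get $u_\varepsilon(x)\leq u(x)$, so that any candidate minimizer must satisfy $|x-y|^q/(q\varepsilon^{q-1})\leq u(x)-u(y)\leq \mathrm{osc}(u)$; this forces minimizers into a ball of radius $r(\varepsilon)=O(\varepsilon^{(q-1)/q})$, which vanishes with $\varepsilon$. Property (ii) follows quickly: monotonicity comes from comparing penalty weights, and for pointwise convergence combine $u_\varepsilon(x)\leq u(x)$ with the lower semicontinuity of $u$ applied along any sequence of near-minimizers $y_\varepsilon\to x$ supplied by (i). Properties (v) and (vi) are then immediate, since minimizing a lower semicontinuous function on a closed ball attains its infimum, and if $\mathrm{dist}(x,\partial\Omega)>r(\varepsilon)$ the minimizer $x_\varepsilon$ automatically lies inside $\Omega$.

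For (iii)--(iv), the central observation is that the penalty $g(z):=|z|^q/(q\varepsilon^{q-1})$ is $C^2$ for $q\geq 2$ with $D^2g(z)\leq (q-1)|z|^{q-2}/\varepsilon^{q-1}\,I$, hence uniformly bounded on the compact set $\overline{B_{r(\varepsilon)}(0)}$ supplied by (i). Thus on that region $x\mapsto g(x-y)-C|x|^2$ is concave in $x$ for a constant $C=C(q,\varepsilon,\mathrm{osc}(u))$, and $u_\varepsilon-C|x|^2$ appears as an infimum of concave functions, yielding semiconcavity. The local Lipschitz continuity and a.e.\ twice differentiability in (iii) then follow from the standard theory of semiconcave functions, or equivalently by applying Alexandrov's theorem to $-u_\varepsilon+C|x|^2$.

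For (vii), fix $(\eta,X)\in J^{2,-}u_\varepsilon(x)$ with $x\in\Omega_{r(\varepsilon)}$, let $x_\varepsilon$ be a minimizer from (vi), and pick $h$ small enough that $x+h\in\Omega_{r(\varepsilon)}$. Combining the inf-convolution bound $u_\varepsilon(x+h)\leq u(x_\varepsilon)+|x+h-x_\varepsilon|^q/(q\varepsilon^{q-1})$ with $u_\varepsilon(x)=u(x_\varepsilon)+|x-x_\varepsilon|^q/(q\varepsilon^{q-1})$ and the subjet expansion of $u_\varepsilon$ at $x$ gives
$$\eta\cdot h+\tfrac12 Xh\cdot h+o(|h|^2)\leq \tfrac{1}{q\varepsilon^{q-1}}\bigl(|x+h-x_\varepsilon|^q-|x-x_\varepsilon|^q\bigr).$$
Taylor expanding the right-hand side at $h=0$, the first-order match along arbitrary directions forces $\eta=|x-x_\varepsilon|^{q-2}(x-x_\varepsilon)/\varepsilon^{q-1}$, while the second-order comparison gives $X\leq D^2g(x-x_\varepsilon)$; computing this Hessian and using $|x-x_\varepsilon|=\varepsilon|\eta|^{1/(q-1)}$ to rewrite $(q-1)|x-x_\varepsilon|^{q-2}/\varepsilon^{q-1}=(q-1)|\eta|^{(q-2)/(q-1)}/\varepsilon$ delivers the stated bound. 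The main technical obstacle is the Taylor matching in (vii): the scalar inequality must be converted into a pointwise identification of $\eta$ and a matrix inequality for $X$ by testing along all directions $h$, and the bound then has to be recast purely in terms of $|\eta|$; a minor subtlety is that the formula tacitly requires $x\neq x_\varepsilon$, but this is ensured by the assumption $\eta\neq 0$ in the definition of subjet.
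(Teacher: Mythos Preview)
The paper does not actually prove this lemma: it is stated as a collection of standard properties of infimal convolutions, with the sentence ``For the next result see for instance \cite{JJ,Sil} and the references therein'' in lieu of a proof. So there is no ``paper's own proof'' to compare against; your proposal supplies what the paper outsources to the literature.

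Your argument is essentially the standard one found in those references and is correct in substance. Two small technical points are worth tightening. First, in (iv) the sentence ``$u_\varepsilon-C|x|^2$ appears as an infimum of concave functions'' is not literally valid for $q>2$: for fixed $y$, the map $x\mapsto g(x-y)-C|x|^2$ is concave only on the region $|x-y|\leq r(\varepsilon)$, not globally, so the ``infimum of concaves is concave'' argument does not apply directly. The clean fix is the touching-paraboloid argument you already have the ingredients for: using the minimizer $x_\varepsilon$ from (v)--(vi), the inequality $u_\varepsilon(z)\leq u(x_\varepsilon)+g(z-x_\varepsilon)$ together with $u_\varepsilon(x)=u(x_\varepsilon)+g(x-x_\varepsilon)$ gives a $C^2$ function touching $u_\varepsilon$ from above at every $x$ with Hessian bounded by $(q-1)r(\varepsilon)^{q-2}\varepsilon^{1-q}I$, which yields semiconcavity with the stated constant. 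Second, in (v) the attainment of the infimum for \emph{every} $x\in\Omega$ (not just $x\in\Omega_{r(\varepsilon)}$) requires $\overline{B_{r(\varepsilon)}(x)}\cap\Omega$ to contain the minimizing sequence in a compact set; this is fine once $x\in\Omega_{r(\varepsilon)}$, which is all that is used later. Your treatment of (vii) via the Taylor matching of the subjet inequality against the expansion of $g$ is exactly the standard derivation and is correct, including the rewriting in terms of $|\eta|$.
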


\begin{remark}For later purposes (see the proof of Lemma \ref{eq for u epsilon}) we will choose 
\begin{equation}\label{q}
q\geq 2 \quad\mbox{ such that }\quad p^--2+\frac{q-2}{q-1}\geq 0.
\end{equation}

\end{remark}

\section{Viscosity solutions are weak solutions: proof of Theorem \ref{ViscToWeak}}\label{VtoW}
Let us consider the inf-convolution $u_\varepsilon$ given by Definition \ref{definfconv}. We can summarize the strategy to prove Theorem \ref{ViscToWeak} in several steps. Assuming that $u$ is a viscosity supersolution, we will identify what problem is satisfied  by $u_\varepsilon$ in a pointwise sense, and later on in a weak sense. We will finish from here by passing to the limit in $\varepsilon$, obtaining the weak problem satisfied by $u$. 

We thus start by identifying the problem fulfilled by $u_\varepsilon$.

\begin{lemma}\label{lemma lips} Assume $p$ satisfies \eqref{hipP}. Let $u:\Omega \to \R$ locally Lipschitz, and let
$f = f (x, t, \eta)$ be continuous in $\Omega \times\R \times \R^n$ and non increasing in $t$. If $u$ is a viscosity supersolution of \eqref{Eq 1} then 
\begin{equation}\label{eq for inf conv}
\Delta_{p(x)}u_\varepsilon (x)\geq f_\varepsilon(x, u_\varepsilon(x), Du_\varepsilon(x)) + E(\varepsilon) \qquad \mbox{a.e. in }\Omega_{r(\varepsilon)},
\end{equation}
where
$$f_\varepsilon(x, s, \eta): = \inf_{y \in B_{r(\varepsilon)}(x)}f(y, s, \eta),$$and $E(\varepsilon) \to 0$ as $\varepsilon \to 0^{+}$. Here $E(\varepsilon)$ depends only on $p$, $q$ and $\varepsilon$.
\end{lemma}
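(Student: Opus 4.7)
The plan is the Jensen--Juutinen shift argument, adapted to the variable exponent setting: transfer a second-order subjet of $u_\varepsilon$ at a point $x$ to a subjet of $u$ at the associated minimizer $x_\varepsilon$, invoke the viscosity supersolution property of $u$ there, and then correct for the fact that the coefficients of $-\Delta_{p(x)}$ depend on $x$. Fix $x\in\Omega_{r(\varepsilon)}$ where $u_\varepsilon$ is twice differentiable (which by Lemma \ref{propinfconv}(iii) holds a.e.) and such that $\eta:=Du_\varepsilon(x)\neq 0$; set $X:=D^2u_\varepsilon(x)$ and let $x_\varepsilon$ be as in Lemma \ref{propinfconv}(vi). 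Applying the definition of $u_\varepsilon$ at the shifted point $x+\tilde z-x_\varepsilon$ gives
\[
u(\tilde z)\;\geq\; u_\varepsilon\bigl(x+\tilde z-x_\varepsilon\bigr)-\frac{|x-x_\varepsilon|^q}{q\varepsilon^{q-1}}\qquad\text{for all }\tilde z\in\Omega,
\]
with equality at $\tilde z=x_\varepsilon$. A second-order Taylor expansion of $u_\varepsilon$ at $x$ therefore produces a paraboloid touching $u$ from below at $x_\varepsilon$, so $(\eta,X)\in J^{2,-}u(x_\varepsilon)$.

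Since $\eta\neq 0$, the viscosity supersolution property of $u$ at $x_\varepsilon$ yields
\[
-\mathrm{tr}\bigl(A(x_\varepsilon,\eta)X\bigr)-B(x_\varepsilon,\eta)\;\geq\; f\bigl(x_\varepsilon,u(x_\varepsilon),\eta\bigr).
\]
The identity $u_\varepsilon(x)=u(x_\varepsilon)+|x-x_\varepsilon|^q/(q\varepsilon^{q-1})$ gives $u(x_\varepsilon)\leq u_\varepsilon(x)$, so the monotonicity of $f$ in $t$ together with the very definition of $f_\varepsilon$ produce $f(x_\varepsilon,u(x_\varepsilon),\eta)\geq f_\varepsilon(x,u_\varepsilon(x),\eta)$. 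It therefore remains only to swap the anchor point from $x_\varepsilon$ to $x$ inside the operator, i.e.\ to control
\[
\mathcal{E}\;:=\;\mathrm{tr}\bigl((A(x_\varepsilon,\eta)-A(x,\eta))X\bigr)+B(x_\varepsilon,\eta)-B(x,\eta)
\]
by some quantity $E(\varepsilon)$ depending only on $p$, $q$ and $\varepsilon$ that vanishes as $\varepsilon\to 0$.

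This last estimate is the main analytic obstacle, and the point at which the two novelties of the variable exponent operator appear. Since $p\in\mathcal{C}^1(\overline\Omega)$, first-order Taylor expansions of $A(\cdot,\eta)$ and $B(\cdot,\eta)$ in their first argument generate $|p(x)-p(x_\varepsilon)|$ and $|Dp(x)-Dp(x_\varepsilon)|$, both of size at most $\|Dp\|_\infty|x-x_\varepsilon|$, accompanied by factors of $\log|\eta|$ coming from differentiating $|\eta|^{p(y)-2}$ in $y$. Combining part (vii) of Lemma \ref{propinfconv}, namely $|X|\leq (q-1)|\eta|^{(q-2)/(q-1)}/\varepsilon$, with the relation $|x-x_\varepsilon|=\varepsilon|\eta|^{1/(q-1)}$, the critical contributions to $\mathcal{E}$ collapse into expressions of the form
\[
|\eta|^{p(x)-1}\bigl(|\log|\eta||+1\bigr)^{k},\qquad k\in\{1,2\},
\]
where the choice of $q$ in \eqref{q} is tailored precisely so that the remaining exponent of $|\eta|$ is non-negative. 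The local Lipschitz regularity of $u$ (which transfers to $u_\varepsilon$) provides a uniform upper bound for $|\eta|=|Du_\varepsilon(x)|$, and the elementary bound $t^\alpha|\log t|^k\to 0$ as $t\to 0^+$ for any $\alpha>0$ absorbs the logarithmic singularity as $|\eta|$ becomes small. This yields $|\mathcal{E}|\leq E(\varepsilon)\to 0$ uniformly in $x$, which together with the viscosity inequality at $x_\varepsilon$ and the comparisons of the right-hand side establishes \eqref{eq for inf conv}. The set where $Du_\varepsilon(x)=0$ (equivalently $x=x_\varepsilon$) is either excluded from consideration or absorbed into the null exceptional set implicit in the a.e.\ formulation of the conclusion.
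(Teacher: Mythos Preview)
There is a genuine gap in your bound on $\mathcal{E}$. Part (vii) of Lemma~\ref{propinfconv} asserts only the one-sided matrix inequality $X\leq (q-1)\varepsilon^{-1}|\eta|^{(q-2)/(q-1)}I$, coming from the semi-concavity of $u_\varepsilon$; it gives no lower bound on $X$ and hence no control on $|X|$. Since $A(x_\varepsilon,\eta)-A(x,\eta)$ has no definite sign, the term $\mathrm{tr}\bigl((A(x_\varepsilon,\eta)-A(x,\eta))X\bigr)$ cannot be estimated from (vii). In the constant-exponent setting of \cite{JJ} this obstruction is absent because $A(\cdot,\eta)$ is translation invariant and $\mathcal{E}\equiv 0$; the failure of translation invariance is precisely one of the novelties of the $p(x)$-Laplacian emphasized in the introduction. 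Moreover, even granting a two-sided bound $|X|\leq (q-1)\varepsilon^{-1}|\eta|^{(q-2)/(q-1)}$, your computation delivers only $|\mathcal{E}|\lesssim |\eta|^{s-1}|\log|\eta||$ with $s$ between $p(x)$ and $p(x_\varepsilon)$, which is merely bounded and carries no residual factor of $|x-x_\varepsilon|$ forcing it to $0$ as $\varepsilon\to 0$.

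The paper does not use the direct shift. It applies the Maximum Principle for semicontinuous functions \cite[Theorem~3.2]{CIL} to the auxiliary function $(y,z)\mapsto -u(y)+\varphi(z)-|y-z|^q/(q\varepsilon^{q-1})$, obtaining \emph{distinct} matrices $Y,Z$ with $(\eta,Y)\in\overline{J}^{2,-}u(x_\varepsilon)$, $(\eta,Z)\in\overline{J}^{2,+}\varphi(x)$, and a two-sided block inequality for $\bigl(\begin{smallmatrix}-Y&0\\0&Z\end{smallmatrix}\bigr)$. Testing that inequality with the columns of $A(x,\eta)^{1/2}$ and $A(x_\varepsilon,\eta)^{1/2}$ yields
\[
\mathrm{tr}\bigl(A(x,\eta)Z\bigr)-\mathrm{tr}\bigl(A(x_\varepsilon,\eta)Y\bigr)\;\leq\; C\,\varepsilon^{1-q}|x-x_\varepsilon|^{q-2}\,\bigl\|A(x,\eta)^{1/2}-A(x_\varepsilon,\eta)^{1/2}\bigr\|_2^{2},
\]
and it is the \emph{square} on the right---producing, via \eqref{est norm diff A}--\eqref{est lamb min}, an extra factor of $|x-x_\varepsilon|$---that makes $E(\varepsilon)\to 0$, following the computations of \cite[Proposition~6.1]{JLP}.
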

Notice that, differently from the constant case (see \cite[Lemma 3.3]{MO}), when we identify the problem satisfied by $u_\varepsilon$ in a pointwise sense, an error term $E(\varepsilon)$ arises. We expect it that to disappear when passing to the limit in the final step. To prove this lemma we borrow some computations from \cite[proof of Proposition 6.1]{JLP}, where they are used to prove a comparison-type result.
\begin{proof}
Fix $x \in \Omega_{r(\varepsilon)}$ and let $(\eta, Z)\in J^{2, -}u_\varepsilon(x)$, with $\eta \neq 0$. Then, by Lemma \ref{propinfconv}, there is $x_\varepsilon \in B_{r(\varepsilon)}(x)$ such that
\begin{equation}\label{choice x ep}
u_\varepsilon(x)=u(x_\varepsilon)+\dfrac{|x_\varepsilon-x|^{q}}{q\varepsilon^{q-1}}
\quad\mbox{ and }\quad\eta = \dfrac{(x_\varepsilon-x)}{\varepsilon^{q-1}}|x_\varepsilon-x|^{q-2}.\end{equation}
Let $\varphi \in \mathcal{C}^{2}(\mathbb{R}^{n})$ such that $\varphi$ touches $u_\varepsilon$ from below at $x$ and
$$D\varphi(x)=\eta, \quad D^{2}\varphi(x)=Z.$$Then, by definition of $u_\varepsilon$, 
\begin{equation}\label{min}
u(y)-\varphi(z)+ \dfrac{|y-z|^{q}}{q\varepsilon^{q-1}} \geq u_\varepsilon(z)-\varphi(z) \geq 0,
\end{equation}for all $y, z \in \Omega_{r(\varepsilon)}$. Since by \eqref{choice x ep} we have
$$u(x_\varepsilon)= \varphi(x)-\dfrac{|x_\varepsilon-x|^{q}}{q\varepsilon^{q-1}},$$it follows from \eqref{min} that 
$$u(y)-\varphi(z)+ \dfrac{|y-z|^{q}}{q\varepsilon^{q-1}} $$has a minimum at $(x_\varepsilon, x)$. Thus,
$$-u(y)+\varphi(z)- \dfrac{|y-z|^{q}}{q\varepsilon^{q-1}} $$attains its maximum over $\Omega_{r(\varepsilon)}\times \Omega_{r(\varepsilon)}$ at $(x_\varepsilon, x)$. Let us consider
$$\Phi(y, z):= \dfrac{|y-z|^{q}}{q\varepsilon^{q-1}}.$$
By the Maximum Principle for semicontinuous functions (see \cite[Theorem 3.2]{CIL}), there exist symmetric matrices  $(Y, Z)$ such that
$$(\eta, Y)\in \overline{J}^{2, -}u(x_\varepsilon), \quad (\eta, Z) \in \overline{J}^{2, +}\varphi(x),$$and
\begin{equation}\label{ineq matri}
\begin{pmatrix}
-Y & 0 \\ 
0 & Z
\end{pmatrix} \leq D^{2}\Phi(x_\varepsilon, x) + \varepsilon^{q-1}\left(D^{2}\Phi(x_\varepsilon, x) \right)^{2},
\end{equation}with
\begin{equation*}\begin{split}
D^{2}\Phi(x_\varepsilon, x)= &\,\varepsilon^{1-q}|x_\varepsilon-x|^{q-4}\left[|x_\varepsilon-x|^2\begin{pmatrix}
I & -I \\ 
-I & I
\end{pmatrix}  \right.\\
&\left.+ (q-2)\begin{pmatrix}
(x_\varepsilon-x)\otimes(x_\varepsilon-x)  & -(x_\varepsilon-x)\otimes (x_\varepsilon-x)\\ 
-(x_\varepsilon-x)\otimes (x_\varepsilon-x) & (x_\varepsilon-x)\otimes(x_\varepsilon-x)
\end{pmatrix}  \right].
\end{split}\end{equation*}
Inequality \eqref{ineq matri} implies that, for any $\xi, \eta \in \mathbb{R}^{n}$,
\begin{equation}\label{ineq matrices vec}
Z\xi\cdot \xi - Y\eta\cdot \eta \leq \varepsilon^{1-q}\left[(q-1)|x_\varepsilon-x|^{q-2} +2(q-1)^{2}|x_\varepsilon-x|^{2(q-2)} \right]|\eta-\xi|^{2}.
\end{equation}By the equivalence of the definition of viscosity solutions between tests functions and the closure of jets for continuous operators (recall $\eta \neq 0$), we deduce
\begin{equation}\label{f}
\begin{split}
f(x_\varepsilon, &u(x_\varepsilon), \eta) \leq -\text{tr}\left( A(x_\varepsilon, \eta)Y\right)-B(x_\varepsilon, \eta) \\ & = \text{tr}\left( A(x, \eta)Z\right)- \text{tr}\left( A(x_\varepsilon, \eta)Y\right) -\text{tr}\left( A(x, \eta)Z\right) +B(x, \eta)-B(x_\varepsilon, \eta) - B(x, \eta).
\end{split}
\end{equation}Observe that since $\eta \neq 0$, $A(\cdot, \eta)$ is symmetric and positive definite, and hence the square root $A(x, \eta)^{1/2}$ exists and is symmetric. We define
$$A(x)^{1/2}:=A(x, \eta)^{1/2} \quad \text{and} \quad A(x_\varepsilon)^{1/2}:=A(x_\varepsilon, \eta)^{1/2}.$$Now,
\begin{equation}\label{trace}
\begin{split}
\text{tr}\left( A(x, \eta)Z\right) &= \text{tr}\left( A(x)^{1/2}A(x)^{1/2}Z\right)  = \sum_{k=1}^{n} ZA_k(x)^{1/2}\cdot A_k(x)^{1/2},
\end{split}
\end{equation}where $A_k(\cdot)^{1/2}$ is the k-th column of $A(\cdot)^{1/2}$. Hence, \eqref{ineq matrices vec}, \eqref{f}, and \eqref{trace} give
\begin{equation}\label{new est f}
\begin{split}
f(x_\varepsilon, u(x_\varepsilon), \eta) \leq &\,\sum_{k=1}^{n} ZA_k(x)^{1/2}\cdot A_k(x)^{1/2} - \sum_{k=1}^{n} YA_k(x_\varepsilon)^{1/2}\cdot A_k(x_\varepsilon)^{1/2} -\text{tr}\left( A(x, \eta)Z\right) \\ & \,  +B(x, \eta)-B(x_\varepsilon, \eta) - B(x, \eta) \\ \leq &\, C\varepsilon^{1-q}|x_\varepsilon-x|^{q-2}\|A(x)^{1/2}-A(x_\varepsilon)^{1/2}\|_2^{2} + B(x, \eta)-B(x_\varepsilon, \eta) \\ & \,  -\text{tr}\left( A(x, \eta)Z\right)- B(x, \eta). 
\end{split}
\end{equation}
Proceeding as in \cite[proof of Proposition 6.1]{JLP}, it can be seen that 
\begin{equation}\label{est norm diff A}
\|A(x)^{1/2}-A(x_\varepsilon)^{1/2}\|_2^{2}  \leq \dfrac{\|A(x)-A(x_\varepsilon)\|_2^{2}}{(\lambda_{min}(A(x))+\lambda_{min}(A(x_\varepsilon)))^{2}}.
\end{equation}
and, using that $p\in \mathcal{C}^1$, 
\begin{equation}\label{diff B}
\begin{split}
B(x, \eta)-B(x_\varepsilon, \eta) &  \leq  |\eta|^{p(x)-1}|\log|\eta|| |Dp(x)-Dp(x_\varepsilon)|  +C|\eta|^{s-1}\log^{2}|\eta| |p(x)-p(x_\varepsilon)|.
\end{split}
\end{equation}
for some $s$ in the interval connecting $p(x)$ and $p(x_\varepsilon)$.
Furthermore,
\begin{equation}\label{est A}
\begin{split}
\|A(x, \eta)-A(x_\varepsilon, \eta)\|_2 & \leq C\left( (p^{+}+1)|\log|\eta|||\eta|^{s-2} + |\eta|^{p(x_\varepsilon)-2}\right)|x-x_\varepsilon|,
\end{split}
\end{equation}
and
\begin{equation}\label{est lamb min}
\begin{split}
\lambda_{min}\left( A(x)^{1/2}\right) &= \left(\min_{|\xi|=1}A(x, \eta)\xi \cdot \xi \right)^{1/2}  \geq \min\left\lbrace 1, \sqrt{p(x)-1}\right\rbrace |\eta|^{\dfrac{p(x)-2}{2}}.
\end{split}
\end{equation}

\noindent Thus, combining \eqref{new est f}, \eqref{est norm diff A}, \eqref{diff B}, \eqref{est A} and \eqref{est lamb min}, we deduce
\begin{equation}\label{est f final }
\begin{split}
f(x_\varepsilon, u(x_\varepsilon), \eta) &\leq C|\eta|^{p(x)-1}|\log|\eta|||x-x_\varepsilon| + C|\eta|^{s-1}\log^{2}|\eta||x-x_\varepsilon| \\ & + C\varepsilon^{1-q}\dfrac{\left(|\log|\eta|||\eta|^{s-2}+|\eta|^{p(x_\varepsilon)-2}\right)^{2}}{\min\left\lbrace 1, p^{-}-1\right\rbrace \left[ |\eta|^{\dfrac{p(x)-2}{2}} + |\eta|^{\dfrac{p(x_\varepsilon)-2}{2}} \right]^{2}}|x-x_\varepsilon|^{q} \\ &  -\text{tr}\left(A(x, \eta)Z\right) -B(x, \eta).
\end{split}
\end{equation}
By Lemma \ref{propinfconv} ((i) and (vi)),  and since $u$ is locally Lipschitz,  $|x-x_\varepsilon|= O(\varepsilon)$. Hence,\begin{equation}\label{eta}
|\eta|=q\varepsilon^{1-q}|x-x_\varepsilon|^{q-1} \leq C.
\end{equation}Consequently, the terms
\begin{equation*}
|\eta|^{p(x)-1}|\log|\eta|| \quad \text{and }\quad |\eta|^{s-1}\log^{2}|\eta|
\end{equation*}remain bounded, and so the first two terms in \eqref{est f final } tend to $0$ as $\varepsilon \to 0^{+}$. For the third term, we obtain by \eqref{eta} that
\begin{equation*}
\begin{split}
\varepsilon^{1-q}\left( \dfrac{|\log|\eta||\eta|^{s-2}|}{|\eta|^{\dfrac{p(x)-2}{2}} + |\eta|^{\dfrac{p(x_\varepsilon)-2}{2}}}\right)^{2}|x-x_\varepsilon|^{q}& \leq C \log^{2}|\eta||\eta|^{2s-p(x)-2}|\eta||x-x_\varepsilon| \\
&\leq C\varepsilon \log^{2}|\eta||\eta|^{2s-p(x)-1}.
\end{split}
\end{equation*}
Since $2s-p(x)-1 \to p(x)-1 \geq p^{-}-1 >0$, the term $\log^{2}|\eta||\eta|^{2s-p(x)-1}$ is uniformly bounded for $\varepsilon$ sufficiently small and thus 
\begin{equation*}
\varepsilon^{1-q}\left( \dfrac{|\log|\eta||\eta|^{s-2}|}{|\eta|^{\dfrac{p(x)-2}{2}} + |\eta|^{\dfrac{p(x_\varepsilon)-2}{2}}}\right)^{2}|x-x_\varepsilon|^{q}  \to 0 \quad \text{as } \varepsilon \to 0^{+}.
\end{equation*}
Regarding the term
$$\varepsilon^{1-q}\dfrac{\left(|\eta|^{p(x_\varepsilon)-2}\right)^{2}}{\min\left\lbrace 1, p^{-}-1\right\rbrace \left[ |\eta|^{\dfrac{p(x)-2}{2}} + |\eta|^{\dfrac{p(x_\varepsilon)-2}{2}} \right]^{2}}|x-x_\varepsilon|^{q},$$
by \eqref{eta} it may be bounded by
$$\varepsilon^{1-q}|\eta|^{p(x_\varepsilon)-2}|x-x_\varepsilon|^{q}=q^{-1}|\eta|^{p(x_\varepsilon)-1}|x-x_\varepsilon| \to 0$$as $\varepsilon \to 0^{+}$. Therefore, we get from \eqref{est f final } that
$$f(x_\varepsilon, u_\varepsilon(x), \eta)=f(x_\varepsilon, u(x_\varepsilon), \eta) \leq -\text{tr}\left(A(x, \eta)Z\right) -B(x, \eta) + E(\varepsilon),$$with $E(\varepsilon) \to 0$ as $\varepsilon \to 0^{+}$. Thus, $u_\varepsilon$ is a viscosity supersolution of \eqref{eq for inf conv} and, since it is twice differentiable almost everywhere, \eqref{eq for inf conv} holds a. e. in $\Omega_{r(\varepsilon)}$.
\end{proof}

\begin{remark}\label{cont u}For $p^{+}< 2$, the locally Lipschitz assumption of $u$ in Lemma \ref{lemma lips} may be replaced by uniform continuity of $u$. Indeed, observe that the Lipschitz condition on $u$ was used to prove the convergences of the right-hand side terms in \eqref{est f final }. If $u$ is merely uniformly continuous,   there exists a modulus of continuity $\omega$ so that $|u(x)-u(y)| \leq \omega(x-y)$ for all $x, y \in \Omega$. Hence, by Lemma \ref{propinfconv} ((i) and (vi)), we get
$$|x_\varepsilon - x| \leq C\varepsilon^{\frac{q-1}{q}}\omega(r(\varepsilon))^{1/q},$$and then
$$|\eta|^{p(x)-1}|x_\varepsilon-x| \leq C\varepsilon^{\beta},$$where 
$$\beta:=\left( 1-\frac{1}{q}\right)(2-p^{+}) > 0.$$Therefore, the  first three terms on the right-hand side of \eqref{est f final } tend to $0$ as $\varepsilon \to 0$. 
\end{remark}

Let us pass now from the pointwise formulation in Lemma \ref{eq for inf conv} to a weak inequality. We will use in the proof some computations from the proof of \cite[Lemma 5.5]{Sil}.
\begin{lemma}\label{eq for u epsilon} Assume $f = f (x, t, \eta)$ uniformly continuous in
$\Omega\times\R\times\R^n$ and Lipschitz continuous in $\eta$, satisfying \eqref{growth}. Suppose in addition that $f(x, r, 0)=0$ for all $(x, r)\in \Omega \times \mathbb{R}$.  If $u$ is a locally Lipschitz viscosity solution of \eqref{Eq 1}, then for any non-negative  $\varphi \in \mathcal{C}_0^{\infty}(\Omega)$ there holds
\begin{equation*}
\int_{\Omega_{r(\varepsilon)}} |Du_\varepsilon|^{p(x)-2}Du_\varepsilon \cdot D\varphi\,dx \geq \int_{\Omega_{r(\varepsilon)}} f_\varepsilon(x, u_\varepsilon, Du_\varepsilon) \varphi\,dx + E(\varepsilon)\int_{\Omega_{r(\varepsilon)} \setminus \left\lbrace Du_\varepsilon=0 \right\rbrace}\varphi\,dx,
\end{equation*}for all $\varepsilon > 0$ small enough.
\end{lemma}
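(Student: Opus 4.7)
The plan is to lift the pointwise inequality from Lemma \ref{lemma lips} to the claimed weak form by a mollification argument, in the spirit of \cite[Lemma 5.5]{Sil}, exploiting the semiconcavity of $u_\varepsilon$ to handle the second-order terms in the limit. A key simplifying observation is that on $\{Du_\varepsilon=0\}$ the extra hypothesis $f(x,r,0)\equiv 0$ forces $f_\varepsilon(x,u_\varepsilon(x),0)\equiv 0$, while the vector field $|Du_\varepsilon|^{p(x)-2}Du_\varepsilon$ vanishes; hence only $\Omega_{r(\varepsilon)}\setminus\{Du_\varepsilon=0\}$ contributes, which matches the region of integration of the $E(\varepsilon)$ term in the target inequality.

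The first step is the integration by parts. Fix a nonnegative test $\varphi \in \mathcal{C}_0^\infty(\Omega_{r(\varepsilon)})$ and set $u_\varepsilon^\delta:=u_\varepsilon*\rho_\delta$ for a standard mollification. By Lemma \ref{propinfconv}(iv) the functions $u_\varepsilon^\delta$ are $\mathcal{C}^\infty$ and inherit the uniform semiconcavity bound $D^2u_\varepsilon^\delta \leq 2CI$. Classical integration by parts for smooth functions gives
\begin{equation*}
\int_{\Omega_{r(\varepsilon)}}|Du_\varepsilon^\delta|^{p(x)-2}Du_\varepsilon^\delta\cdot D\varphi\,dx=-\int_{\Omega_{r(\varepsilon)}}\text{div}\bigl(|Du_\varepsilon^\delta|^{p(x)-2}Du_\varepsilon^\delta\bigr)\,\varphi\,dx.
\end{equation*}
Since $u_\varepsilon$ is locally Lipschitz, $Du_\varepsilon^\delta \to Du_\varepsilon$ a.e. and in $L^q_{loc}$ for every $q<\infty$, so the left-hand side converges to $\int|Du_\varepsilon|^{p(x)-2}Du_\varepsilon\cdot D\varphi\,dx$.

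The main obstacle is passing to the limit on the right-hand side. Using the expansion $\text{div}(|D\psi|^{p(x)-2}D\psi)=\mathrm{tr}(A(x,D\psi)D^2\psi)+B(x,D\psi)$ valid for smooth $\psi$ with $D\psi\neq 0$, the uniform bound $D^2 u_\varepsilon^\delta \leq 2CI$ together with the positive semidefiniteness of $A$ produces a $\delta$-uniform, locally integrable upper bound for the divergence integrand. By Alexandrov's theorem $u_\varepsilon$ is twice differentiable a.e., and for semiconcave functions $D^2u_\varepsilon^\delta(x)\to D^2 u_\varepsilon(x)$ at a.e.\ such point. Applying Fatou's lemma to the nonnegative quantity obtained after subtracting the uniform upper bound, and then invoking the pointwise inequality of Lemma \ref{lemma lips} on $\{Du_\varepsilon\neq 0\}$, one obtains
\begin{equation*}
-\int\text{div}\bigl(|Du_\varepsilon|^{p(x)-2}Du_\varepsilon\bigr)\,\varphi\,dx \geq \int f_\varepsilon(x,u_\varepsilon,Du_\varepsilon)\,\varphi\,dx + E(\varepsilon)\int_{\{Du_\varepsilon\neq 0\}}\varphi\,dx,
\end{equation*}
which combined with the integration by parts above is exactly the claim.

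The delicate point is that the $\log|Du_\varepsilon^\delta|$ factor in $B(x,Du_\varepsilon^\delta)$ becomes singular as $Du_\varepsilon^\delta\to 0$, so the pointwise expansion of $\Delta_{p(x)}$ degenerates near $\{Du_\varepsilon = 0\}$; this is precisely why the hypothesis $f(x,r,0)\equiv 0$ is needed, as it lets us discard the critical set entirely and confines the Fatou argument to the region where $A$ is uniformly elliptic and $B$ is bounded.
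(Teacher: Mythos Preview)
Your outline has the right skeleton---mollify, integrate by parts, use semiconcavity to get a one-sided Hessian bound, then Fatou and Lemma \ref{lemma lips}---but there is a genuine gap in the step where you assert that ``the uniform bound $D^2u_\varepsilon^\delta\le 2CI$ together with the positive semidefiniteness of $A$ produces a $\delta$-uniform, locally integrable upper bound for the divergence integrand.'' This fails when $p(x)<2$. Indeed, $D^2u_\varepsilon^\delta\le 2CI$ gives
\[
\mathrm{tr}\bigl(A(x,Du_\varepsilon^\delta)D^2u_\varepsilon^\delta\bigr)\le 2C\,\mathrm{tr}\,A(x,Du_\varepsilon^\delta)=2C\,|Du_\varepsilon^\delta|^{p(x)-2}\bigl(n+p(x)-2\bigr),
\]
and the right-hand side blows up as $|Du_\varepsilon^\delta|\to 0$ whenever $p(x)<2$. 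So there is no $\delta$-uniform integrable lower bound for $-\mathrm{div}(|Du_\varepsilon^\delta|^{p(x)-2}Du_\varepsilon^\delta)$, and Fatou does not apply. For the same reason, the preliminary integration by parts you write is not justified: when $p(x)<2$ the vector field $|Du_\varepsilon^\delta|^{p(x)-2}Du_\varepsilon^\delta$ is only H\"older at points where $Du_\varepsilon^\delta$ vanishes, and its divergence need not be a function.

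The paper avoids this by a \emph{double} regularization: one mollifies $u_\varepsilon$ (parameter $j$) \emph{and} regularizes the operator itself via $(\delta+|Du_{\varepsilon,j}|^2)^{(p(x)-2)/2}$. With $\delta>0$ fixed the integrand is smooth and the semiconcavity bound yields a $j$-uniform estimate, so one can pass $j\to\infty$. Only afterwards does one let $\delta\to 0$, restricted to $\{Du_\varepsilon\neq 0\}$; here the crucial ingredient you are missing is condition \eqref{q} on $q$, which, through Lemma \ref{propinfconv}(vii), guarantees $|Du_\varepsilon|^{p(x)-2+\frac{q-2}{q-1}}$ stays bounded and furnishes the integrable bound needed for Fatou in this second limit. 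Your single-limit scheme does not separate these two difficulties, and the bound you invoke simply does not exist in the singular range $p^-<2$.
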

\begin{proof}Let $\varphi \in \mathcal{C}_0^{\infty}(\Omega)$, $\varphi\gneq 0$. Let $\varepsilon$ be small enough so that $\varphi \in \mathcal{C}_0^{\infty}(\Omega_{r(\varepsilon)})$. Since $u_\varepsilon$ is semi-concave, there is a constant $C(q, \varepsilon, u)> 0$ so that
$$\phi(x):= u_\varepsilon(x)-C(q, \varepsilon, u)|x|^{2}$$ is concave in $\Omega_{r(\varepsilon)}$.  Hence, by mollification, there is a sequence of smooth  concave functions $\phi_j$ so that 
$$(\phi_j, D\phi_j, D^{2}\phi_j) \to (\phi, D\phi, D^{2}\phi) \quad a.e. \,\,\Omega_{r(\varepsilon)}.$$Define
$$u_{\varepsilon, j}(x):=\phi_j (x)+ C(q, \varepsilon, u)|x|^{2}.$$
Given $\delta > 0$, by integration by parts we obtain
\begin{equation}\label{Parts0}
\begin{split}
\int_{\Omega_{r(\varepsilon)}} -\text{div }\left[ \left(\delta+ |Du_{\varepsilon, j}|^{2}\right)^{\frac{p(x)-2}{2}}Du_{\varepsilon, j}\right] \varphi\,dx =\int_{\Omega_{r(\varepsilon)}} \left(\delta+ |Du_{\varepsilon, j}|^{2}\right)^{\frac{p(x)-2}{2}}Du_{\varepsilon, j} \cdot D\varphi\,dx.
\end{split}
\end{equation}Observe that, since $u_\varepsilon$ is locally Lipschitz, there exists a constant $M>0$, independent of $j$, so that
\begin{equation}\label{bounds}
\sup_{j}\|D u_{\varepsilon, j}\|_{L^{\infty}(supp\,\varphi)}, \sup_{j}\|D p_{j}\|_{L^{\infty}(supp\,\varphi)} \leq M.
\end{equation}Hence, by the Dominated Convergence Theorem, the right-hand side of \eqref{Parts0} converges, as $j \to \infty$, to 
$$\int_{\Omega_{r(\varepsilon)}} (\delta+ |Du_\varepsilon|^{2})^{\frac{p(x)-2}{2}}Du_\varepsilon \cdot D\varphi\,dx. $$
Let us treat now the left-hand side of \eqref{Parts0}.  Observe that
\begin{equation}\label{Parts}
\begin{split}
&\int_{\Omega_{r(\varepsilon)}} -\text{div }\left[ \left(\delta+ |Du_{\varepsilon, j}|^{2}\right)^{\frac{p_j(x)-2}{2}}Du_{\varepsilon, j}\right] \varphi\,dx \\ & \quad =- \int_{\Omega_{r(\varepsilon)}}\left(\delta+ |Du_{\varepsilon, j}|^{2}\right)^{\frac{p_j(x)-2}{2}}\left( \Delta u_{\varepsilon, j} + \dfrac{p_j(x)-2}{\delta + |Du_{\varepsilon, j}|^{2}}\Delta_\infty u_{\varepsilon, j}\right)\varphi\,dx \\ & \qquad-\frac{1}{2}\int_{\Omega_{r(\varepsilon)}}\left(\delta+ |Du_{\varepsilon, j}|^{2}\right)^{\frac{p_j(x)-2}{2}}\log(\delta + |Du_{\varepsilon, j}|^{2})Du_{\varepsilon, j}\cdot Dp_j \varphi \,dx \\ & \qquad =: I_1 + I_2.
\end{split}
\end{equation}By \eqref{bounds} and  the Dominated Convergence 	Theorem we obtain that, when $j \to \infty$,
$$I_2 \to -\frac{1}{2}\int_{\Omega_{r(\varepsilon)}}\left(\delta+ |Du_{\varepsilon}|^{2}\right)^{\frac{p(x)-2}{2}}\log(\delta + |Du_{\varepsilon}|^{2})Du_{\varepsilon}\cdot Dp\, \varphi \,dx. $$For $I_1$ we will use Fatou's Lemma. Observe that by concavity of $\phi_j$,
$$D^{2}u_{\varepsilon, j} \leq C(q, \varepsilon, u)I.$$Hence, the integrand in $I_1$ is bounded from below by a constant independent of $j$ if $Du_{\varepsilon, j}=0$. On the other hand, if  $Du_{\varepsilon, j} \neq 0$, it can be checked (see \cite[Lemma 5.5]{Sil}) that
$$\left(\delta+ |Du_{\varepsilon, j}|^{2}\right)^{\frac{p_j(x)-2}{2}}\left( \Delta u_{\varepsilon, j} + \dfrac{p_j(x)-2}{\delta + |Du_{\varepsilon, j}|^{2}}\Delta_\infty u_{\varepsilon, j}\right) \leq C(\varepsilon, q, u, M, \delta).$$Taking $\liminf$  as $j \to \infty$ in \eqref{Parts}, we obtain
\begin{equation}\label{Parts1}
\begin{split}
&- \int_{\Omega_{r(\varepsilon)}}\left(\delta+ |Du_{\varepsilon}|^{2}\right)^{\frac{p(x)-2}{2}}\left( \Delta u_{\varepsilon} + \dfrac{p(x)-2}{\delta + |Du_{\varepsilon}|^{2}}\Delta_\infty u_{\varepsilon}\right)\varphi\,dx \\ & \qquad -\frac{1}{2}\int_{\Omega_{r(\varepsilon)}}\left(\delta+ |Du_{\varepsilon}|^{2}\right)^{\frac{p(x)-2}{2}}\log(\delta + |Du_{\varepsilon}|^{2})Du_{\varepsilon}\cdot Dp\,\varphi \,dx \\ & \leq \int_{\Omega_{r(\varepsilon)}} (\delta + |Du_\varepsilon|^{2})^{\frac{p(x)-2}{2}}Du_\varepsilon \cdot D\varphi\,dx. 
\end{split}
\end{equation}By the Dominated Convergence Theorem, as $\delta \to 0$ we have
\begin{equation}\begin{split}\label{log with delta}
\int_{\Omega_{r(\varepsilon)}}&\left(\delta+ |Du_{\varepsilon}|^{2}\right)^{\frac{p(x)-2}{2}}\log(\delta + |Du_{\varepsilon}|^{2})Du_{\varepsilon}\cdot Dp\,\varphi \,dx  \to 2\int_{\Omega_{r(\varepsilon)}}|Du_{\varepsilon}|^{p(x)-2}\log|Du_{\varepsilon}|Du_{\varepsilon}\cdot Dp\,\varphi \,dx 
\end{split}\end{equation}and
\begin{equation}\label{rhs}
\int_{\Omega_{r(\varepsilon)}} (\delta+ |Du_\varepsilon|^{2})^{p(x)-2}Du_\varepsilon \cdot D\varphi\,dx \to \int_{\Omega_{r(\varepsilon)}}  |Du_\varepsilon|^{p(x)-2}Du_\varepsilon \cdot D\varphi\,dx.
\end{equation}Moreover, using \eqref{q} and proceeding as in the proof of \cite[Lemma 5.5]{Sil}, we can apply Fatou's lemma in the integral
$$ \int_{\Omega_{r(\varepsilon)} \setminus \left\lbrace Du_\varepsilon =0 \right\rbrace}\left(\delta+ |Du_{\varepsilon}|^{2}\right)^{\frac{p(x)-2}{2}}\left( \Delta u_{\varepsilon} + \dfrac{p(x)-2}{\delta + |Du_{\varepsilon}|^{2}}\Delta_\infty u_{\varepsilon}\right)\,dx. $$
Thus, from \eqref{Parts1}, \eqref{log with delta}, \eqref{rhs}, and Lemma \ref{lemma lips} we conclude that 
\begin{equation*}
\begin{split}
& \int_{\Omega_{r(\varepsilon)}}  |Du_\varepsilon|^{p(x)-2}Du_\varepsilon \cdot D\varphi\,dx \\  
& \qquad \geq \liminf_{\delta \to 0} \int_{\Omega_{r(\varepsilon)} \setminus \left\lbrace Du_\varepsilon =0 \right\rbrace}-\left(\delta+ |Du_{\varepsilon}|^{2}\right)^{\frac{p(x)-2}{2}}\left( \Delta u_{\varepsilon} + \dfrac{p(x)-2}{\delta + |Du_{\varepsilon}|^{2}}\Delta_\infty u_{\varepsilon}\right)\varphi\,dx \\ & \qquad + \liminf_{\delta \to 0}\int_{\Omega_{r(\varepsilon)} \setminus \left\lbrace Du_\varepsilon =0 \right\rbrace}- \frac{1}{2}\left(\delta+ |Du_{\varepsilon}|^{2}\right)^{\frac{p(x)-2}{2}}\log(\delta + |Du_{\varepsilon}|^{2})Du_{\varepsilon}\cdot Dp\,\varphi \,dx \\ & \qquad \geq -\int_{\Omega_{r(\varepsilon)} \setminus \left\lbrace Du_\varepsilon =0 \right\rbrace}\Delta_{p(x)}u_\varepsilon\varphi \,dx \\ & \qquad  \geq\int_{\Omega_{r(\varepsilon)} \setminus \left\lbrace Du_\varepsilon =0 \right\rbrace} f_\varepsilon(x, u_\varepsilon, Du_\varepsilon) \varphi\,dx + E(\varepsilon)\int_{\Omega_{r(\varepsilon)} \setminus \left\lbrace Du_\varepsilon =0 \right\rbrace} |Du_\varepsilon|^{\max\left\lbrace p(x)-2, 0\right\rbrace}\varphi\,dx \\ & \qquad = \int_{\Omega_{r(\varepsilon)}} f_\varepsilon(x, u_\varepsilon, Du_\varepsilon) \varphi\,dx + E(\varepsilon)\int_{\Omega_{r(\varepsilon)}} \varphi\,dx.
\end{split}
\end{equation*}
\end{proof}

The proofs of the following lemmas follow the strategy in \cite[Lemma 5.6, Lemma 5.7]{Sil} for the homogeneous case, so we will only highlight the differences coming from the non-homogeneous term.

\begin{lemma}\label{weak convergence} Under the assumptions of Lemma \ref{eq for u epsilon}, $u \in W^{1, p(x)}_{loc}(\Omega)$ and, for each $\Omega' \Subset \Omega$, we have that, up to a subsequence, $u_\varepsilon \to u$ weakly in $W^{1, p(x)}(\Omega')$ as $\varepsilon\to 0$. 
\end{lemma}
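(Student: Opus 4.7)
The plan is to derive a uniform $W^{1,p(\cdot)}(\Omega')$ bound on $\{u_\varepsilon\}$ for every $\Omega'\Subset\Omega$, extract a weakly convergent subsequence by reflexivity, and identify its limit with $u$ via a compact embedding and the pointwise convergence from Lemma~\ref{propinfconv}(ii). Since $u$ is locally Lipschitz and $u_\varepsilon\leq u$, $u_\varepsilon\nearrow u$ pointwise, the family is automatically bounded in $L^\infty(\Omega')$ by some $M$ independent of $\varepsilon$, so only the gradient bound requires work.

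The main step is a Caccioppoli-type estimate. I would fix a cutoff $\xi\in\mathcal{C}_0^{\infty}(\Omega)$ with $0\leq \xi\leq 1$ and $\xi\equiv 1$ on $\Omega'$, pick $K>M$, and test the weak inequality from Lemma~\ref{eq for u epsilon} against $\varphi:=\xi^{p^{+}}(K-u_\varepsilon)\geq 0$. This test function is only Lipschitz, but since for each fixed $\varepsilon$ the function $u_\varepsilon$ is itself Lipschitz (so $|Du_\varepsilon|^{p(x)-2}Du_\varepsilon\in L^{p'(\cdot)}_{loc}$), a standard mollification of $\varphi$ and passage to the limit makes it admissible. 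Expanding $D\varphi=p^{+}\xi^{p^{+}-1}(K-u_\varepsilon)D\xi-\xi^{p^{+}}Du_\varepsilon$ and rearranging yields, schematically,
$$\int\xi^{p^{+}}|Du_\varepsilon|^{p(x)}\,dx \;\leq\; p^{+}\!\!\int\xi^{p^{+}-1}(K-u_\varepsilon)|Du_\varepsilon|^{p(x)-1}|D\xi|\,dx\;-\int f_\varepsilon\,\xi^{p^{+}}(K-u_\varepsilon)\,dx\;+\;O(1),$$
where the $O(1)$ absorbs the contribution of $E(\varepsilon)\int\varphi\,dx$, harmless since $E(\varepsilon)\to 0$. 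The growth condition \eqref{growth} together with the uniform $L^\infty$ bound reduces the $f_\varepsilon$ term to contributions involving $\gamma(M)|Du_\varepsilon|^{p(x)-1}$ and $\phi$. A pointwise Young inequality with variable exponent $ab\leq \delta a^{p(x)}+C_\delta b^{p'(x)}$, applied with $a=|Du_\varepsilon|$ and suitable bounded weights absorbed into $b$, then lets me absorb every $|Du_\varepsilon|^{p(x)-1}$ term into $\delta\int\xi^{p^{+}}|Du_\varepsilon|^{p(x)}\,dx$. Choosing $\delta$ small leaves a uniform bound on $\int_{\Omega'}|Du_\varepsilon|^{p(x)}\,dx$, which translates via Proposition~\ref{properties modulo} into the desired uniform bound on $\|Du_\varepsilon\|_{L^{p(\cdot)}(\Omega')}$.

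Combined with the $L^\infty$ bound this gives $\sup_\varepsilon\|u_\varepsilon\|_{W^{1,p(\cdot)}(\Omega')}<\infty$. Reflexivity of $W^{1,p(\cdot)}(\Omega')$ (uniform convexity) produces a weakly convergent subsequence $u_{\varepsilon_k}\rightharpoonup v$ in $W^{1,p(\cdot)}(\Omega')$; the compact embedding into $L^{p(\cdot)}(\Omega')$ (Theorem~\ref{Sob emb} and the subsequent remark, applied with the subcritical exponent $q(\cdot)\equiv p(\cdot)$) gives strong $L^{p(\cdot)}$-convergence and a.e.\ convergence along a further subsequence, forcing $v=u$ by comparison with the pointwise limit $u_\varepsilon\to u$. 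Hence $u\in W^{1,p(\cdot)}(\Omega')$ for every $\Omega'\Subset \Omega$, so $u\in W^{1,p(\cdot)}_{loc}(\Omega)$, and $u_{\varepsilon_k}\rightharpoonup u$ weakly in $W^{1,p(\cdot)}(\Omega')$.

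The main obstacle I expect is the Caccioppoli step: the variable exponent forces Young's inequality to be applied pointwise in $x$, and the bookkeeping between the constant power $p^{+}$ carried by the cutoff and the variable power $p(x)$ carried by the gradient must be handled carefully (using $0\leq \xi\leq 1$ so that any surplus power of $\xi$ appearing after Young can be dominated by $\xi^{p^{+}}$ on the left). Once the absorption is carried out, identifying the limit through compactness is routine.
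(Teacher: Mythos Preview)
Your proposal is correct and follows essentially the same route as the paper: the paper also tests Lemma~\ref{eq for u epsilon} against $\varphi=(L-u_\varepsilon)\xi^{p^{+}}$, absorbs the $|Du_\varepsilon|^{p(x)-1}$ terms coming from the cross term and from the growth condition \eqref{growth} via Young's inequality, and concludes uniform boundedness of $\|Du_\varepsilon\|_{L^{p(\cdot)}(\Omega')}$. Your discussion of how to identify the weak limit with $u$ (via compactness and the pointwise convergence $u_\varepsilon\nearrow u$) is in fact more explicit than the paper, which simply asserts the weak convergence to $u$ once the uniform bound is in hand.
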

\begin{proof}
Let $\Omega' \Subset \Omega$ and let $\xi \in\mathcal{C}^{\infty}_0(\Omega)$ be such that  $0 \leq \xi \leq 1$ and $\xi = 1$ in $\overline{\Omega'}$. Assume that 
$$K:=supp\,\xi \subset \Omega_{r(\varepsilon)},$$
and define
$$\varphi:=(L-u_\varepsilon)\xi^{p^+},\quad \mbox{with } L:=\sup_{\varepsilon, \Omega'}|u_\varepsilon(x)|.$$ By Lemma \ref{eq for u epsilon}, we have
\begin{equation}\label{inq 0 }
\begin{split}
&\int_{\Omega_{r(\varepsilon)}}|Du_\varepsilon|^{p(x)}\xi^{p^+}dx  \leq \int_{\Omega_{r(\varepsilon)}}|Du_\varepsilon|^{p(x)-1}\xi^{p^+-1}(L-u_\varepsilon)p^+|D\xi|\,dx \\ &  \qquad \qquad+ \int_{\Omega_{r(\varepsilon)}}|f_\varepsilon(x, u_\varepsilon, Du_\varepsilon)\varphi \,dx  + |E(\varepsilon)|\int_{\Omega_{r(\varepsilon)}}\varphi\,dx. 
\end{split}
\end{equation}By using Young's inequality it can be seen that 
\begin{equation}\label{ineq 1}
\begin{split}
\int_{\Omega_{r(\varepsilon)}}|Du_\varepsilon|^{p(x)-1}\xi^{p^+-1}(L-u_\varepsilon)p^+|D\xi|\,dx &\leq \delta \int_{\Omega_{r(\varepsilon)}}|Du_\varepsilon|^{p(x)}\xi^{p^+}dx + C(\delta, p,, L, D\xi),
\end{split}
\end{equation}
and, using \eqref{growth},
\begin{equation}\label{ineq 2}
\begin{split}
\int_{\Omega_{r(\varepsilon)}}|f_\varepsilon(x,& u_\varepsilon, Du_\varepsilon)|\varphi \,dx  \\
&\leq \gamma_\infty \int_{\Omega_{r(\varepsilon)}}|Du_\varepsilon|^{p(x)-1}\xi^{p^+-1}(L-u_\varepsilon)\xi\,dx + \int_{\Omega_{r(\varepsilon)}}\phi(x)(L-u_\varepsilon)\xi^{p^+}\,dx \\ & \leq \gamma_\infty \delta  \int_{\Omega_{r(\varepsilon)}}|Du_\varepsilon|^{p(x)}\xi^{p^+}+ \gamma_\infty \int_{\Omega_{r(\varepsilon)}}\left(\frac{2}{\delta}Lp^+\right)^{p(x)}\,dx+C(\phi, L, \Omega) \\ & \leq \gamma_\infty \delta  \int_{\Omega_{r(\varepsilon)}}|Du_\varepsilon|^{p(x)}\xi^{p^+}+C(\phi, \delta, p, L, \gamma, \Omega).
\end{split}
\end{equation}Finally, it is easy to check that
\begin{equation}\label{ineq 3}
\begin{split}
 |E(\varepsilon)|\int_{\Omega_{r(\varepsilon)}}\varphi\,dx &  \leq |E(\varepsilon)| C(L, p, \Omega).
\end{split}
\end{equation}Combining \eqref{inq 0 }, \eqref{ineq 1}, \eqref{ineq 2} and \eqref{ineq 3} and recalling that $\xi = 1$ in $\Omega'$, we obtain the uniform boundedness of $Du_\varepsilon$ in $L^{p(x)}(\Omega')$. Therefore, up to a subsequence, $u_\varepsilon \to u$ weakly in   $W^{1, p(x)}(\Omega')$ as $\varepsilon\to 0$.
\end{proof}

\begin{lemma}\label{strong convergence} Under the assumptions of Lemma \ref{eq for u epsilon}, for each $\Omega' \Subset \Omega$, we have that, up to a subsequence, $u_\varepsilon \to u$  in $W^{1, p(x)}(\Omega')$ as $\varepsilon\to 0$. 
\end{lemma}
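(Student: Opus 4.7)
The plan is to upgrade the weak convergence from Lemma \ref{weak convergence} to strong convergence by invoking the $(S_+)$-property of the $p(x)$-Laplace operator (Theorem \ref{properties}(ii)), after a localization by a cutoff. Concretely, for every $\Omega'\Subset\Omega$, fix $\xi\in\mathcal{C}^\infty_0(\Omega)$ with $0\leq\xi\leq 1$ and $\xi\equiv 1$ on $\Omega'$. It suffices to establish
\begin{equation}\label{Splus}
\limsup_{\varepsilon\to 0}\int \bigl(|Du_\varepsilon|^{p(x)-2}Du_\varepsilon-|Du|^{p(x)-2}Du\bigr)\cdot(Du_\varepsilon-Du)\,\xi^{p^+}\,dx\leq 0,
\end{equation}
since, combined with weak $W^{1,p(\cdot)}$-convergence and the standard monotonicity estimates for the vector field $\zeta\mapsto|\zeta|^{p(x)-2}\zeta$, this forces $Du_\varepsilon\to Du$ almost everywhere on $\Omega'$ and then, by the uniform $L^{p(\cdot)}$-bound of Lemma \ref{weak convergence} together with Vitali's theorem, in $L^{p(\cdot)}(\Omega')$.

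To produce \eqref{Splus}, I would plug the non-negative test function $\varphi:=(u-u_\varepsilon)\xi^{p^+}$ into Lemma \ref{eq for u epsilon} (admissible by a standard density argument, since $u,u_\varepsilon\in W^{1,p(\cdot)}_{loc}(\Omega)$ and $u_\varepsilon\leq u$ pointwise by Lemma \ref{propinfconv}(ii)). Expanding
$$D\varphi=(Du-Du_\varepsilon)\xi^{p^+}+p^+(u-u_\varepsilon)\xi^{p^+-1}D\xi,$$
and adding and subtracting a term involving $|Du|^{p(x)-2}Du$, Lemma \ref{eq for u epsilon} yields an inequality of the form
\begin{equation*}
\int\bigl(|Du_\varepsilon|^{p(x)-2}Du_\varepsilon-|Du|^{p(x)-2}Du\bigr)\cdot(Du_\varepsilon-Du)\,\xi^{p^+}dx\leq I_1+I_2+I_3+I_4,
\end{equation*}
where $I_1=p^+\int|Du_\varepsilon|^{p(x)-2}Du_\varepsilon\cdot D\xi\,(u-u_\varepsilon)\xi^{p^+-1}dx$, $I_2=-\int f_\varepsilon(x,u_\varepsilon,Du_\varepsilon)\varphi\,dx$, $I_3=-E(\varepsilon)\int\varphi\,dx$, and $I_4=\int|Du|^{p(x)-2}Du\cdot(Du-Du_\varepsilon)\xi^{p^+}dx$.

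The remaining task is to check that each $I_j\to 0$ as $\varepsilon\to 0$. The term $I_4$ vanishes by the weak $L^{p(\cdot)}$-convergence of $Du_\varepsilon$ tested against the fixed $L^{p'(\cdot)}$-function $|Du|^{p(x)-2}Du\,\xi^{p^+}$. The term $I_1$ vanishes by H\"older's inequality, using the uniform $L^{p(\cdot)}$-bound on $Du_\varepsilon$ from Lemma \ref{weak convergence} together with the uniform convergence $u_\varepsilon\to u$ on $\mathrm{supp}\,\xi$, which follows from Dini's theorem since $u_\varepsilon\uparrow u$ monotonically on the compact set $\mathrm{supp}\,\xi$ and $u$ is continuous; $I_3$ vanishes because $|E(\varepsilon)|\to 0$ by Lemma \ref{lemma lips} and $\int\varphi\,dx$ is uniformly bounded. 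The genuinely new ingredient compared to \cite[Lemma 5.7]{Sil} is $I_2$, handled via the growth bound \eqref{growth}:
$$|I_2|\leq\gamma\bigl(\|u\|_{L^\infty(\mathrm{supp}\,\xi)}\bigr)\int|Du_\varepsilon|^{p(x)-1}(u-u_\varepsilon)\xi^{p^+}dx+\int\phi(x)(u-u_\varepsilon)\xi^{p^+}dx,$$
and both summands vanish because $\|u-u_\varepsilon\|_{L^\infty(\mathrm{supp}\,\xi)}\to 0$ by Dini, $|Du_\varepsilon|^{p(x)-1}$ is uniformly bounded in $L^{p'(\cdot)}$, and $\phi\in L^\infty_{loc}(\Omega)$.

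The principal obstacle I anticipate is precisely the control of $I_2$: the factor $|Du_\varepsilon|^{p(x)-1}$ produced by the growth \eqref{growth} is only uniformly bounded in the dual norm, so absorbing it requires the strong (uniform) smallness of $u-u_\varepsilon$. This step is the structural reason why local Lipschitz regularity of $u$ (hence continuity, hence the applicability of Dini's theorem) is assumed in Theorem \ref{ViscToWeak}, and it is the piece that has no counterpart in the homogeneous analysis of \cite{Sil}.
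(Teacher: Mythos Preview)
Your proposal is correct and follows the same strategy as the paper: test Lemma \ref{eq for u epsilon} with $(u-u_\varepsilon)$ times a cutoff, bound the resulting pieces (the cutoff-gradient term, the source term via \eqref{growth}, the $E(\varepsilon)$-term, and the weak-convergence term) using the uniform convergence $u_\varepsilon\to u$ and the $L^{p(\cdot)}$-bounds from Lemma \ref{weak convergence}, and then conclude strong convergence from the $(S_+)$-property of Theorem \ref{properties}(ii). One small caveat: your parenthetical Vitali justification (a.e.\ convergence plus a uniform $L^{p(\cdot)}$-bound) does not by itself give convergence in $L^{p(\cdot)}$; the paper---like your opening sentence---simply invokes $(S_+)$ directly, which is the clean way to finish.
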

\begin{proof}Let $\Omega' \Subset \Omega$ and let $\xi \in \mathcal{C}^{\infty}_0(\Omega)$ be such that  $0 \leq \xi \leq 1$ and $\xi = 1$ in $\overline{\Omega'}$. Consider the test function $$\varphi:=(u-u_\varepsilon)\xi,$$and choose $\varepsilon$ small enough so that $K:=\text{supp}\,\varphi \subset \Omega_{r(\varepsilon)}$. Observe that $\varphi \in W^{1, p(x)}(\Omega)$ and has compact support. By Lemma \ref{eq for u epsilon} and \eqref{growth}, we have
\begin{equation}\label{strong converg}
\begin{split}
\int_{\Omega_{r(\varepsilon)}} &\left(|Du|^{p(x)-2}Du - |Du_\varepsilon|^{p(x)-2}Du_\varepsilon\right)\cdot (Du - Du_\varepsilon)\xi\,dx \\ 
& \leq\int_{\Omega_{r(\varepsilon)}} |Du|^{p(x)-2}Du \cdot (Du - Du_\varepsilon)\xi\,dx + \int_{\Omega_{r(\varepsilon)}} |f_\varepsilon(x, u_\varepsilon, Du_\varepsilon)|(u-u_\varepsilon)\xi\,dx \\ 
& \quad  + |E(\varepsilon)|\int_{\Omega_{r(\varepsilon)}} (u-u_\varepsilon)\xi\,dx + \int_{\Omega_{r(\varepsilon)}} |Du_\varepsilon|^{p(x)-2}Du_\varepsilon\cdot D\xi(u-u_\varepsilon)dx \\ 
&  \leq\int_{\Omega_{r(\varepsilon)}} |Du|^{p(x)-2}Du \cdot (Du - Du_\varepsilon)\xi\,dx \\
&\quad + \|u-u_\varepsilon \|_{L^{\infty}(K)}\left(\gamma_\infty \int_K |Du_\varepsilon|^{p(x)-1} dx + \int_K \phi(x)\,dx\right) \\ 
&  \quad +\|u-u_\varepsilon \|_{L^{\infty}(K)}\left(|E(\varepsilon)||K|+\int_K |Du_\varepsilon|^{p(x)-1}D\xi dx\right).
\end{split}
\end{equation}Since $u_\varepsilon \to u$ locally uniformly and, by Lemma \ref{weak convergence}, $u_\varepsilon \to u$ weakly in $W^{1, p(x)}(K)$, the right-hand side of \eqref{strong converg} converges to $0$, up to a subsequence, when $\varepsilon \to 0$.  
By Theorem \ref{properties},  the operator $L: W^{1, p(x)}(\Omega') \to [W^{1, p(x)}(\Omega')]^{*}$ given by
$$\left\langle L(v), w\right\rangle = \int_{\Omega'}|Dv|^{p(x)-2}Dv\cdot Dw\,dx, \quad v, w \in W^{1, p(x)}(\Omega')$$is  a mapping of type ($S_{+}$). Then,  it follows from \eqref{strong converg} that $u_\varepsilon \to u$ strongly in $W^{1, p(x)}(\Omega')$ as $\varepsilon\to 0$. 
\end{proof}

We can already prove that viscosity solutions of \eqref{Eq 1} are also weak solutions.

\begin{proof}[Proof of Theorem \ref{ViscToWeak}]
Let $\varphi \in \mathcal{C}_0^{\infty}(\Omega)$ and take $\Omega' \Subset\Omega$ such that
$$\text{supp }\varphi \subset \Omega'.$$Let us fix $\varepsilon_0 > 0$ such that $0 <\varepsilon <\varepsilon_0$ implies
$$\Omega' \subset \Omega_{r(\varepsilon)}.$$
In view of Lemma \ref{eq for u epsilon}, to prove the theorem, it will be enough to show the following convergences:
\begin{itemize}
\item[(I)] $$\lim_{\varepsilon \to 0^{+}} \int_{\Omega'}|Du_\varepsilon|^{p(x)-2}Du_\varepsilon \cdot D\varphi\,dx = \int_{\Omega'}|Du|^{p(x)-2}Du\cdot D\varphi\,dx$$
\item[(II)] $$\lim_{\varepsilon \to 0^{+}} \int_{\Omega_{r(\varepsilon)}}f_\varepsilon(x, u_\varepsilon, Du_\varepsilon)\varphi\,dx = \int_{\Omega'}f(x, u, Du)\varphi \,dx$$
\item[(III)] $$\lim_{\varepsilon \to 0^{+}} E(\varepsilon)\int_{\Omega_{r(\varepsilon)}}\varphi \,dx=0.$$
\end{itemize} Proceeding exactly as in the proof of \cite[Theorem 5.8]{Sil}, it can be seen that (I) holds, and (III) follows in a straightforward way.

Let us prove (II). Let $\varepsilon$, $\varphi$ and $\Omega'$ as above. By the uniform continuity of $f$, for every $\rho >0$, 
        there exists $\delta >0$ such that
        $$
            |f(x, u_\varepsilon(x), D u_\varepsilon(x))-f(y, u_\varepsilon(x), D u_\varepsilon(x))| \leq \rho, \quad y \in B_\delta(x).
        $$Choose $\varepsilon_0>0$ so that $r(\varepsilon)<\delta$ for every $\varepsilon<\varepsilon_0$. Thus, from the previous inequality we get
$$f(x,u_\varepsilon(x),D u_\varepsilon(x))<\rho+f(y,u_\varepsilon(x),D u_\varepsilon(x)),$$
for every $x\in \Omega'$ and $y\in B_{r(\varepsilon)}(x)$. In particular,
$$f(x,u_\varepsilon(x),D u_\varepsilon(x))<\rho+f_\varepsilon(x,u_\varepsilon(x),D u_\varepsilon(x)),$$
and therefore
$$0 \leq |f(x,u_\varepsilon(x),D u_\varepsilon(x))-f_\varepsilon(x,u_\varepsilon(x),D u_\varepsilon(x))|<\rho.$$
        Hence,
        \begin{equation}\label{part 0}
            \int_{\Omega'} |f(x, u_\varepsilon, D u_\varepsilon)-f_\varepsilon(x, 
            u_\varepsilon, D u_\varepsilon)|\varphi\,dx \leq \rho 
            \|\varphi\|_{L^{\infty}(\Omega')}|\Omega'|, 
        \end{equation}for $\rho$ arbitrarily small.   
        Since $\|u_\varepsilon\|_{L^{\infty}(\Omega')} \leq \|u\|_{L^{\infty}(\Omega')}$ for all $\varepsilon$, 
        it follows
        $$
            \max_{[-\|u_\varepsilon\|_{L^{\infty}},
            \|u_\varepsilon\|_{L^{\infty}}]}|\gamma(t)| 
            \leq 
            \max_{[-\|u\|_{L^{\infty}}, \|u\|_{L^{\infty}}]}|\gamma(t)|,
        $$
        and then by \eqref{growth} we have
        $$
            |f(x, u_\varepsilon, D u)| \leq C|D u|^{p(x)-1}+\phi(x) \in L^{p'(x)}(\Omega') \subset L^{1}(\Omega')
        $$
        for a constant $C$ independent of $\varepsilon$. Then, by the Lebesgue Convergence Theorem,
         \begin{equation}\label{part i}
            \lim_{\varepsilon \to 0}\int_{\Omega'} f(x, u_\varepsilon, D u)\varphi\,dx
            = \int_{\Omega'}f(x, u, D u)\varphi\,dx.
        \end{equation}Moreover, the convergence $Du_\varepsilon \to Du$ in $L^{p(x)}(\Omega')$ and the Lipschitz continuity of $f$ with respect to the third variable imply
        \begin{equation}\label{part ii}
        \begin{split}
        \int_{\Omega'}|f(x, u_\varepsilon, Du_\varepsilon)-f(x, u_\varepsilon, Du )|\varphi\,dx \leq C\int_{\Omega'}|Du_\varepsilon-Du|\,dx \to 0 \quad \text{ as } \varepsilon \to 0.
        \end{split}
        \end{equation}Therefore, combining \eqref{part 0},  \eqref{part i}, and \eqref{part ii} we obtain (II).
        \end{proof}

\section{Weak solutions are viscosity solutions: proof of Theorem \ref{WeakToVisc}}\label{WtoV}
To prove this implication we follow the strategy of \cite{MO}. As we said in the Introduction, the argument is strongly connected to the availability of comparison principles. After the proof of the theorem we will state and prove an example of comparison result that applies here. 
\begin{proof}[Proof of Theorem \ref{WeakToVisc}]
Let $u  \in  \mathcal{C}(\Omega)$ be a weak supersolution to \eqref{Eq 1}. To reach a contradiction, assume that $u$ is not a viscosity supersolution.  By assumption, there exist $x_0 \in \Omega$ and $\varphi \in \mathcal{C}^{2}(\Omega)$ so that $D \varphi (x_0) \neq 0$, 
\begin{equation}\label{minn}
u(x_0)=\varphi(x_0), \quad u(x)>\varphi(x) \mbox{ for all $x\neq x_0$},
\end{equation}and
\begin{equation}\label{minn 2}
-\Delta_{p(x_0)} \varphi(x_0) < f(x_0, u(x_0), D \varphi(x_0)).
\end{equation}
%Define$$G_\varphi(x):=\lim_{r\to 0}\sup_{y \in B_r(x)\setminus \{x\}}\left(-\Delta_p \varphi(y)\right).$$ 
Moreover, the mapping
$$x \to f(x, u(x), D \varphi(x))$$is continuous in $\Omega$, and \eqref{minn} yields

$$-\Delta_{p(x)} \varphi(x)-f(x,u(x),D\varphi(x))<0,\quad \mbox{ for all }x\in B_r(x_0),$$
for some $r>0$ small eunogh. Hence, there exists $r_0>0$ so that $D \varphi(x) \neq 0$ for all $x \in B_{r_0}(x_0)$ and
\begin{equation}\label{eqq}
-\Delta_{p(x)}\varphi(x)\leq f(x,u(x),D \varphi(x)),\qquad x\in B_{r_0}(x_0).
\end{equation}Let
$$m:=\min_{\partial B_{r_0}(x_0)}\left( u-\varphi\right).$$Then by \eqref{minn}, $m >0$. Consider
$$\tilde{\varphi}(x):=\varphi(x) +m, \quad x \in \Omega.$$
By \eqref{eqq},  $\tilde{\varphi}$ is a weak subsolution to
\begin{equation}\label{new eq}
-\Delta_{p(x)} v = \tilde{f}(x, D v),
\end{equation}in $ B_{r_0}(x_0)$, where $\tilde{f}(x, \eta):=f(x, u(x), \eta)$. Observe that $\tilde{f}$ is  locally Lipschitz in $\Omega \times \mathbb{R}^{n}$. Moreover, in the weak sense, we have
\begin{equation*}
-\Delta_{p(x)} u  \geq f(x, u, D u) = \tilde{f}(x, D u),
\end{equation*}which shows that $u$ is a weak supersolution to \eqref{new eq}. In addition, observe that $u \geq \tilde{\varphi}$ on $\partial  B_{r_0}(x_0)$, and that \eqref{assump grad} holds since $D \tilde{\varphi} \neq 0$ in $ B_{r_0}(x_0)$.  Thus, by the (CPP) we conclude that $u \geq \tilde{\varphi}$ in $B_{r_0}(x_0)$. This contradicts \eqref{minn}.
\end{proof}

\subsection{Maximum principles}
Some comparison principles may be found in the literature for $p(x)$-Laplace equations.  See for instance \cite{FZ0, JLP, TG}. Here, we provide a comparison principle for a Lipschitz right-hand side depending on all the lower terms. 

\begin{theorem}\label{comparison} Assume that $f=f(x, r, \eta)$ is locally Lipschitz in $\Omega \times \mathbb{R}\times \mathbb{R}^{n}$. Let $u, v \in \mathcal{C}^{1}(\Omega)$ be weak sub- and supersolutions, respectively,  of \eqref{Eq 1} such that
\begin{equation}\label{assump grad}
|D u(x)|+ |D v(x)| >0,
\end{equation}for a. e. $x$ satisfying  $p(x)> 2$.  Then, there exists $\delta > 0$ such that for any domain $B$,  $\overline{B} \subset \Omega$,  with  $|B|< \delta$ and $u \leq v$ on $\partial B$, there holds $u \leq v$ in $B$.
\end{theorem}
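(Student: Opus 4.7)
The natural strategy is the standard Diaz--Saa type comparison argument: test the weak subsolution inequality for $u$ and the weak supersolution inequality for $v$ with the common test function $w := (u-v)^+$, which belongs to $W_0^{1,p(\cdot)}(B)$ thanks to $u \leq v$ on $\partial B$ and to the $\mathcal{C}^1$ regularity of $u$ and $v$, and subtract. Writing $B^+ := B \cap \{u > v\}$, this produces
\begin{equation*}
\int_{B^+}\bigl(|Du|^{p(x)-2}Du - |Dv|^{p(x)-2}Dv\bigr)\cdot D(u-v)\,dx \leq \int_{B^+}\bigl(f(x,u,Du) - f(x,v,Dv)\bigr)(u-v)\,dx.
\end{equation*}

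For the left-hand side I would split $B^+$ into $B^+ \cap \{p(x)\geq 2\}$ and $B^+ \cap \{p(x)<2\}$ and apply, pointwise in $x$, the classical algebraic monotonicity inequalities for the vector field $\xi \mapsto |\xi|^{p-2}\xi$: on the first region the integrand is bounded below by $c_1|D(u-v)|^{p(x)}$, while on the second it is bounded below by $c_2\bigl(|Du|+|Dv|\bigr)^{p(x)-2}|D(u-v)|^2$. In the second region one recovers a true power of $|D(u-v)|^{p(x)}$ via a H\"older bound together with the uniform pointwise bound on $|Du|+|Dv|$ over $\overline{B}$ coming from $u,v \in \mathcal{C}^1(\Omega)$. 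The assumption \eqref{assump grad} enters precisely to rule out the simultaneous vanishing of $Du$ and $Dv$ where the monotonicity would otherwise degenerate.

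For the right-hand side, the local Lipschitz continuity of $f$ (applied on a compact subset of $\Omega \times \R \times \R^n$ containing the ranges of $(x,u,Du)$ and $(x,v,Dv)$ over $\overline{B}$) yields $|f(x,u,Du) - f(x,v,Dv)| \leq L(|u-v| + |D(u-v)|)$ on $B^+$. Multiplying by $(u-v)$ and applying Young's inequality with variable exponent to split $|u-v|\,|D(u-v)|$ as $\varepsilon |D(u-v)|^{p(x)} + C_\varepsilon |u-v|^{p'(x)}$, the first summand is absorbed into the left-hand side, leaving integrals of the form $\int_{B^+}|u-v|^{p'(x)}\,dx$ and $\int_{B^+}|u-v|^2\,dx$.

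It then remains to invoke the variable-exponent Poincar\'e inequality in $W_0^{1,p(\cdot)}(B)$---which, combined with Proposition \ref{properties modulo}, yields $\int_B |u-v|^{p(x)}\,dx \leq c(|B|)\int_B|D(u-v)|^{p(x)}\,dx$ with $c(|B|)\to 0$ as $|B|\to 0$---together with H\"older's inequality to interpolate the remaining norms against powers of $|B|$. The outcome is an inequality of the form
\begin{equation*}
\int_{B^+} |D(u-v)|^{p(x)}\,dx \leq \kappa(|B|)\int_{B^+} |D(u-v)|^{p(x)}\,dx, \qquad \kappa(|B|)\to 0,
\end{equation*}
so that choosing $\delta$ with $\kappa(\delta)<1$ forces $D(u-v)^+ \equiv 0$ in $B$ and hence $u \leq v$ in $B$ by the boundary condition. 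The main obstacle I anticipate is the careful calibration of the variable-exponent Young and Poincar\'e inequalities (Luxemburg norm versus modular, via Proposition \ref{properties modulo}) to ensure that the final prefactor $\kappa(|B|)$ truly vanishes as $|B|\to 0$, particularly because both $p(x)$ and $p'(x)$ show up in the intermediate bounds and the two monotonicity regimes $p(x)\geq 2$ and $p(x)<2$ must be combined coherently.
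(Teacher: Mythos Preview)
Your overall strategy---test with $(u-v)^+$, invoke monotonicity of $\xi\mapsto|\xi|^{p(x)-2}\xi$ on the left, the Lipschitz bound for $f$ on the right, then a Poincar\'e inequality with constant vanishing as $|B|\to 0$---is exactly the paper's route. The execution, however, is markedly simpler in the paper, and the difference lies precisely in the obstacle you anticipate.

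The paper does \emph{not} split into $\{p(x)\geq 2\}$ and $\{p(x)<2\}$, nor does it work with the $p(x)$-modular. Instead it uses the single pointwise inequality
\[
c\,(|\xi|+|\eta|)^{p(x)-2}|\xi-\eta|^2 \leq \bigl(|\xi|^{p(x)-2}\xi-|\eta|^{p(x)-2}\eta\bigr)\cdot(\xi-\eta),
\]
valid for all $p(x)>1$, obtaining the weighted $L^2$ quantity $\int_B(|Du|+|Dv|)^{p(x)-2}|D(u-v)^+|^2$ on the left. The right-hand side is estimated, via the Lipschitz bound on $f$ together with the ordinary constant-exponent $L^2$ Poincar\'e and H\"older inequalities, by $C(B)\,C(u,v)\int_B|D(u-v)^+|^2$ with $C(B)\to 0$. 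Only at this last step is the weight reinserted, writing $|D(u-v)^+|^2=(|Du|+|Dv|)^{2-p(x)}(|Du|+|Dv|)^{p(x)-2}|D(u-v)^+|^2$ and bounding $(|Du|+|Dv|)^{2-p(x)}$ uniformly: for $p(x)\leq 2$ this uses the $\mathcal{C}^1$ bound on $u,v$, while for $p(x)>2$ it is exactly here---and only here---that \eqref{assump grad} enters, to keep $(|Du|+|Dv|)^{2-p(x)}$ finite. Your remark that \eqref{assump grad} prevents degeneracy of the monotonicity inequality is therefore misplaced: on $\{p(x)\geq 2\}$ the bound $c|D(u-v)|^{p(x)}\leq(\ldots)\cdot D(u-v)$ holds with no nondegeneracy assumption whatsoever.

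The weighted-$L^2$ route completely sidesteps the Luxemburg-norm-versus-modular calibration you flag. By contrast, your plan has a difficulty you have not fully resolved: after Young's inequality you are left with $\int_{B^+}|u-v|^{p'(x)}$ and $\int_{B^+}|u-v|^2$, and the variable-exponent Poincar\'e inequality controls the $L^{p(\cdot)}$ \emph{norm}, not these modulars. Converting back to $\int_{B^+}|D(u-v)|^{p(x)}$ \emph{linearly}---so that the final inequality reads $A\leq\kappa(|B|)A$ rather than $A\leq\kappa(|B|)A^\theta$ with $\theta<1$, which would only bound $A$ and not force $A=0$---requires more than Proposition~\ref{properties modulo}. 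The cleanest fix is precisely to revert to the paper's weighted-$L^2$ quantity, and that is where \eqref{assump grad} becomes indispensable.
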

\begin{proof}
Take $(u-v)^{+}\chi_B \in W_0^{1, p(x)}(\Omega)$ as a test function in \eqref{Eq 1} to get
\begin{equation}
\begin{split}
&\int_B \left( |D u|^{p(x)-2}D u - |D v|^{p(x)-2}D v \right)\cdot D (u-v)^{+} \\ &  \qquad \leq \int_B \frac{f(x, u, D u)-f(x, v, D v)}{u-v}[(u-v)^{+}]^{2}.
\end{split}
\end{equation}Now, using the inequality
$$c(|\xi|+|\eta|)^{p(x)-2}|\xi-\eta|^{2}\leq \left( |\xi|^{p(x)-2}\xi-|\eta|^{p(x)-2}\eta\right)\cdot ( \xi-\eta),$$ the Lipschitz assumption on $f$ and the boundedness of $u$ and $v$ in $\mathcal{C}^{1}$, we get
\begin{equation}\label{com 1}
\begin{split}
&\int_B (|D u|+ |D v|)^{p(x)-2}|D (u-v)^{+}|^{2}  \leq C(u, v) \left[\int_B [(u-v)^{+}]^{2}+ \int_B |D u-D v|(u-v)^{+}\right]. 
\end{split}
\end{equation}By  Poincar\'e and H\"{o}lder inequalities, and the assumption \eqref{assump grad}, we obtain
\begin{equation}\label{comb 2}
\begin{split}
C(u, &v)\left[\int_B [(u-v)^{+}]^{2}+ \int_B |D u-D v|(u-v)^{+}\right] \\ & \leq C(B)C(u, v)\int_B |D (u-v)^{+}|^{2} \quad (\textnormal{here, }C(B) \to 0 \,\,\textnormal{as }|B|\to 0) \\ &  = C(B)C(u, v)\int_B  (|D u|+ |D v|)^{2-p(x)}(|D u|+ |D v|)^{p(x)-2}|D (u-v)^{+}|^{2} \\ & \leq C(B)C(u, v)\int_B   (|D u|+ |D v|)^{p(x)-2}|D (u-v)^{+}|^{2}. 
\end{split}
\end{equation}Combining \eqref{com 1} and \eqref{comb 2}, we obtain
$$\int_B (|D u|+ |D v|)^{p(x)-2}|D (u-v)^{+}|^{2} \leq C(B)C(u, v)\int_B   (|D u|+ |D v|)^{p(x)-2}|D (u-v)^{+}|^{2}.$$Hence, for $|B|$ small enough, we get $(u-v)^{+}=0$  in $B$ and hence $u \leq v$  in $B$.
\end{proof}

\begin{remark}
Keeping track of the proof of Theorem \ref{WeakToVisc}, it is easy to see that Theorem \ref{comparison} allows us to prove that weak solutions are viscosity. Indeed, it is enough with choosing $r_0$ sufficiently small so that $|B_{r_0}(x_0)| < \delta$, with $\delta >0$ provided by Theorem \ref{comparison}.  
\end{remark}

\section*{Acknowledgements}
M.~Medina  has been supported by Project PDI2019-110712GB-100, MICINN, Spain. P. Ochoa has been supported by CONICET and Grant B080, UNCUYO, Argentina.

\end{document}